\author{Borys Kuca, Tuomas Orponen, and Tuomas Sahlsten}
\title{On a continuous S\'ark{\"o}zy type problem}
\address{Department of Mathematics and Statistics\\ University of Jyv\"askyl\"a,
P.O. Box 35 (MaD)\\
FI-40014 University of Jyv\"askyl\"a\\
Finland}
\email{tuomas.t.orponen@jyu.fi}
\email{bjkuca@jyu.fi}
\address{Department of Mathematics and Systems Analysis \\ Aalto University \\ P.O. Box 11100 \\ FI-00076 Aalto \\ Finland.}
\email{tuomas.sahlsten@aalto.fi}
\subjclass[2010]{28A80 (primary), 42A38, 11B25, 11B30 (secondary)}
\keywords{Fractals, polynomial configurations, Hausdorff dimension, Fourier transforms of measures, Szemer\'edi's theorem, minimeasures, finite fields}
\thanks{B.K. and T.O. are supported by the Academy of Finland via the projects \emph{Quantitative rectifiability in Euclidean and non-Euclidean spaces} and \emph{Incidences on Fractals}, grant Nos. 309365, 314172, 321896. T.S. was supported by a start-up grant from the University of Manchester.}
\newcommand{\R}{\mathbb{R}}
\newcommand{\N}{\mathbb{N}}
\newcommand{\Z}{\mathbb{Z}}
\newcommand{\NN}{\mathbb{N}}
\newcommand{\ZZ}{\mathbb{Z}}
\newcommand{\FF}{\mathbb{F}}
\newcommand{\CC}{\mathbb{C}}
\newcommand{\spt}{\operatorname{spt}}
\newcommand{\Hd}{\dim_{\mathrm{H}}}
\newcommand{\diam}{\operatorname{diam}}
\newcommand{\dist}{\operatorname{dist}}
\newcommand{\eps}{\varepsilon}
\def\Barint_#1{\mathchoice
          {\mathop{\vrule width 6pt height 3 pt depth -2.5pt
                  \kern -8pt \intop}\nolimits_{#1}}%
          {\mathop{\vrule width 5pt height 3 pt depth -2.6pt
                  \kern -6pt \intop}\nolimits_{#1}}%
          {\mathop{\vrule width 5pt height 3 pt depth -2.6pt
                  \kern -6pt \intop}\nolimits_{#1}}%
          {\mathop{\vrule width 5pt height 3 pt depth -2.6pt
                  \kern -6pt \intop}\nolimits_{#1}}}
\numberwithin{equation}{section}
\theoremstyle{plain}
\newtheorem{thm}{Theorem}
\newtheorem*{"thm"}{"Theorem"}
\newtheorem{lemma}{Lemma}
\newtheorem{proposition}{Proposition}
\newtheorem*{question}{Question}
\theoremstyle{definition}
\theoremstyle{remark}
\newcommand{\nref}[1]{(\hyperref[#1]{#1})}
\DeclareMathSymbol{\intop}  {\mathop}{mathx}{"B3}
\begin{document} 

\begin{abstract} We prove that there exists a constant $\epsilon > 0$ with the following property: if $K \subset \R^2$ is a compact set which contains no pair of the form $\{x, x + (z, z^{2})\}$ for $z \neq 0$, then $\Hd K \leq 2 - \epsilon$.  \end{abstract}

\maketitle


\section{Introduction} 

A classical problem originating in the works of Furstenberg \cite{MR498471} and S\'ark{\"o}zy \cite{Sarkozy, sarkozy_1978b} asks to study the size of sets $A \subset \{1,\dots,N\}$ avoiding pairs of the form $\{x,  x+z^2\}$. Furstenberg and S\'ark{\"o}zy independently proved that such sets cannot have positive density, as $N \to \infty$, and S\'ark{\"o}zy gave quantitative estimates on the size of $|A|$ in terms of $N$. The purpose of this article is to consider a continuous analogue of this problem for sets of fractional dimension. S\'ark{\"o}zy's original formulation does not make sense on $\R$, because every pair of points $x < y$ can be expressed as a progression $\{x,  x + z^2\}$. On $\R^{2}$, however, a natural variant of the problem asks for the existence of configurations of the form $\{x,  x + (z,z^{2})\}$. We are able to provide the following answer:

\begin{thm}\label{mainThm} There exists an absolute constant $\epsilon > 0$ such that the following holds. Let $K \subset \R^{2}$ be a compact set with Hausdorff dimension $\Hd K \geq 2 - \epsilon$. Then, there exist $x \in K$ and $z \neq 0$ such that $x + (z,z^{2}) \in K$.  \end{thm}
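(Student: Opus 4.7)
The plan is to argue by contradiction and produce a configuration in $K$ via Fourier analysis applied to a Frostman measure on $K$, exploiting the nonvanishing curvature of the parabola $z \mapsto (z, z^{2})$. Assume $K$ is compact with $\Hd K \geq s := 2 - \epsilon$ (for a small $\epsilon > 0$ to be chosen) and admits no pair of the required form. By Frostman's lemma, take a Borel probability measure $\mu$ on $K$ satisfying $\mu(B(x, r)) \leq C r^{s}$, equivalently $I_{s}(\mu) := \int |\hat\mu(\xi)|^{2} |\xi|^{s-2} \, d\xi < \infty$. Fix a non-negative bump $\phi \in C^{\infty}_{c}((1, 2))$ (to keep $z$ away from $0$) and consider the bilinear counting form
$$
\Lambda(f, g) := \int\!\int f(x) \, g(x + (z, z^{2})) \, \phi(z) \, dz \, dx
$$
evaluated on the mollifications $\mu^{\delta} := \mu * \psi_{\delta}$. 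If $\operatorname{Re} \Lambda(\mu^{\delta}, \mu^{\delta})$ stays bounded below uniformly as $\delta \to 0$, then for every $\delta$ there are points in $K_{\delta}$ (the $\delta$-thickening of $K$) forming a configuration with $z \in [1,2]$, and a compactness argument produces an actual configuration in $K$ with $z \neq 0$, contradicting the hypothesis.

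To establish the uniform lower bound, Plancherel and a change of variables give
$$
\Lambda(\mu^{\delta}, \mu^{\delta}) = \int |\hat{\mu}^{\delta}(\xi)|^{2} \, \overline{m(\xi)} \, d\xi, \qquad m(\xi) := \int \phi(z) \, e^{-2\pi i (z\xi_{1} + z^{2} \xi_{2})} \, dz.
$$
Split at a threshold $R$. On the low-frequency side, $m(0) = \int \phi > 0$ and $m$ is continuous, while $\hat\mu^{\delta}(0) = 1$ and $\mu$ has compact support, so $|\hat\mu^{\delta}(\xi)| \geq 1/2$ on a fixed small ball (uniformly in $\delta$); this yields $\operatorname{Re} \int_{|\xi| \leq r_{0}} |\hat{\mu}^{\delta}|^{2} \overline{m(\xi)} \, d\xi \geq c_{1} > 0$. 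On the high-frequency side, van der Corput applied to the phase $z \xi_{1} + z^{2} \xi_{2}$ (with second derivative $2\xi_{2}$) together with integration by parts in the non-stationary regimes produces $|m(\xi)| \lesssim (1+|\xi|)^{-1/2}$. Provided $s > 3/2$, i.e., $\epsilon < 1/2$, the comparison $|\xi|^{-1/2} \leq |\xi|^{s-2}$ for $|\xi| \geq 1$ together with the Frostman energy bound gives
$$
\left| \int_{|\xi| > R} |\hat{\mu}^{\delta}|^{2} \, \overline{m(\xi)} \, d\xi \right| \lesssim \int_{|\xi| > R} |\hat\mu|^{2} |\xi|^{s - 2} \, d\xi \xrightarrow[R \to \infty]{} 0,
$$
uniformly in $\delta$. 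Taking $R$ so large that this error is at most $c_{1}/2$ yields $\operatorname{Re} \Lambda(\mu^{\delta}, \mu^{\delta}) \geq c_{1}/2$ for all small $\delta$.

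The main obstacle is the sharp interplay between the curvature exponent $1/2$ coming from van der Corput on the parabola and the Frostman decay exponent $s - 2$: the soft Fourier argument sketched above cleanly yields the theorem only for $\epsilon < 1/2$. This suffices for the statement as phrased, and I expect the real work of the paper — suggested by the keywords \emph{minimeasures} and \emph{finite fields} — to be aimed at either sharpening $\epsilon$ or streamlining the argument via tangent-measure blow-ups of $\mu$ at typical points and a transfer to $\FF_{p}^{2}$, where quantitative Sárközy-type density bounds for the polynomial configuration $\{x, x + (z, z^{2})\}$ are available. The delicate step, in either approach, is preventing the oscillatory curve measure $m$ from cancelling against the spectrum of $\mu$ at intermediate frequencies, which is exactly where sharp curvature exponents and Frostman exponents have to be balanced.
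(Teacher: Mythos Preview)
Your argument has a genuine gap at intermediate frequencies. The main-term lower bound $c_{1}$ comes only from the tiny ball $\{|\xi| \leq r_{0}\}$ where $|\hat\mu^{\delta}| \geq 1/2$, while the van der Corput bound $|m(\xi)| \lesssim |\xi|^{-1/2}$ combined with the energy only forces the tail $\int_{|\xi| > R}$ to be small once $R$ is large. The annulus $\{r_{0} < |\xi| \leq R\}$ is left completely uncontrolled, and its contribution can have real part more negative than $-c_{1}$: for a generic Frostman measure on a $(2-\epsilon)$-dimensional set there is no quantitative relation between $c_{1} \sim r_{0}^{2}$ and $\int_{r_{0} < |\xi| \leq R} |\hat\mu|^{2}\, d\xi$. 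Taking $R = r_{0}$ does not help either, since then the ``tail'' becomes $\int_{|\xi| > r_{0}} |\hat\mu|^{2} |\xi|^{-1/2}\, d\xi \lesssim I_{s'}(\mu)$, a fixed finite constant with no reason to sit below $c_{1}$. So the argument does not close for \emph{any} $\epsilon > 0$; the claim that it ``suffices for the statement as phrased'' is incorrect. (A minor side issue: an $s$-Frostman measure only has $I_{s'}(\mu) < \infty$ for $s' < s$, not for $s' = s$.)

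This intermediate-frequency obstacle is exactly what the paper is built around. Lemma~\ref{lemma1} is essentially your decomposition, but with an additional hypothesis---a \emph{spectral gap} $\int_{\sqrt[5]{A} \leq |\xi| \leq B^{2}} |\hat\mu|^{2}\, d\xi \leq A^{-2}$---imposed on $\mu$ precisely to kill the middle annulus. A raw Frostman measure on $K$ need not enjoy such a gap, and the main work of the paper (Proposition~\ref{mainProp}) is to manufacture one. The idea is to locate a \emph{parabolic} dyadic rectangle $\mathbf{Q}$ in which $K$ has nearly full parabolic $s$-content; a pigeonhole then forces every generation-$T$ child of $\mathbf{Q}$ to carry nearly full content too. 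One assembles Frostman measures on those children, weighted to match a fixed smooth bump $\varphi$, so that after the parabolic rescaling $T_{\mathbf{Q}}$ the renormalised blow-up agrees with $\varphi$ at scale $2^{-T}$; this yields $|\hat\mu(\xi) - \hat\varphi(\xi)| \lesssim 2^{-T}|\xi|$ and hence the spectral gap for suitably chosen $A,B,T$. Parabolic rather than Euclidean rescaling is essential because it preserves the pattern $\{x, x+(z,z^{2})\}$. Your closing speculation is off target: the finite-field section is a separate, self-contained theorem with no role in the real-variable proof, and the $\epsilon$ the paper obtains is small and unspecified, not close to $1/2$.
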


The value of $\epsilon > 0$ could, in principle, be deduced from the proof, but it would be rather small. We suspect that Theorem \ref{mainThm} is true for all $\epsilon < \tfrac{1}{2}$. For $\epsilon \geq \tfrac{1}{2}$ there exist counterexamples: a $\tfrac{1}{2}$-H\"older graph with H\"older constant $<1$ over the $y$-axis whose graph has Hausdorff dimension $\tfrac{3}{2}$ cannot contain configurations $\{x,  x + (z,z^{2})\}$. The main point of Theorem \ref{mainThm} is the presence of "non-linearity" in the term $(z,  z^{2})$. A similar conclusion fails if $(z,z^{2})$ is replaced by a linear term, say $(z,  z)$ or $(z,  0)$: this is simply because there exist graphs of Hausdorff dimension $2$.

\subsection{Related work} 
The Furstenberg-S\'ark\"ozy theorem on sets lacking the configuration $\{x,  x+z^2\}$ has sparked a tremendous amount of research. One widely investigated question has been to prove bounds for subsets lacking the progression $\{x,  x+z^2\}$ or more generally $\{x,  x+P(z)\}$ for a fixed choice of polynomial $P\in\ZZ[z]$ with zero constant term\footnote{We note that if $P(z) = z^2 + 1$, then the set $3\NN$ of upper density $1/3$ contains no progressions $\{x,  x + P(z)\}$ as $P(z)$ is never divisible by 3 for $z\in\ZZ$. The condition that $P$ has no constant term ensures that this obstruction does not happen. It can be replaced by a more general condition of $P$ being \emph{intersective}, see e.g. \cite{rice_2019} for a discussion.} \cite{balog_pelikan_pintz_szemeredi_1994, lucier_2006, rice_2019, slijepcevic_2003}. The best bound of the form $|A|\leq C N/(\log N)^{\eps \log\log\log N}$ for sets $A\subset \{1, ..., N\}$ lacking $\{x,  x+z^2\}$ comes from Bloom and Maynard \cite{bloom_maynard_2020}. In another direction, the Furstenberg-S\'ark\"ozy theorem has been substantially extended by Bergelson and Leibman \cite{MR1325795}, who showed that all subsets of integers with positive upper Banach density contain configurations of the form
\begin{align}\label{polynomial progressions}
    \{x,  x+P_1(z),  ...,  x+P_t(z)\}
\end{align}
with $z\neq 0$ for (fixed) distinct polynomials $P_1, ..., P_t\in\ZZ[z]$ with zero constant terms. Gowers then posed the question of finding explicit bounds in the Bergelson-Leibman theorem \cite[Problem 11.4]{Gowers}. Studying sizes of sets avoiding \textit{non-linear} patterns in integers and finite fields has become a widely active topic ever since, with a number of new results being proved recently. Early works of Bourgain and Chang \cite{bourgain_chang_2017}, Dong, Li and Sawin \cite{dong_li_sawin_2017} as well as Peluse \cite{peluse_2018} provided upper bounds of the form $|A| \leq C p^{1 - \eps}$ for subsets of finite fields $\FF_q$ with $p$ prime lacking $$\{x,  x+P_1(z),  x+P_2(z)\}$$ for fixed linearly independent polynomials $P_1, P_2\in\ZZ[z]$, the simplest example of which is the progression $\{x,  x+z,  x+z^2\}$. Around the same time, Prendiville proved a bound $|A|\leq C/(\log\log N)^{\eps}$ for subsets of $\{1, ..., N\}$ lacking (\ref{polynomial progressions}) with $P_i(z) = a_i z^k$ for fixed $k\geq 2$ and distinct $a_i\in\ZZ$. A recent breakthrough is the work of Peluse \cite{peluse_2019}, in which bounds of the form $|A|\leq C p^{1-\eps}$ have been obtained for subsets of $\FF_q$ lacking (\ref{polynomial progressions}) with any fixed distinct linearly independent polynomials $P_1, ..., P_t$. The generality of Peluse's result comes from the replacement of deep tools from number theory or algebraic geometry used in earlier works \cite{bourgain_chang_2017, dong_li_sawin_2017} by a more robust Fourier analytic argument that has allowed her to tackle longer progressions than before. The degree-lowering argument invented in \cite{peluse_2019} has been successfully adapted to a number of different settings. In the integer setting, it has yielded bounds $|A| \leq C N/(\log N)^{\eps}$ for subsets of $\{1, ..., N\}$ lacking $\{x,  x + z,  x + z^{2}\}$ with $z > 0$ \cite{peluse_prendiville_2019, peluse_prendiville_2020} as well as estimates $|A| \leq C N/(\log \log N)^{\eps}$ for subsets lacking (\ref{polynomial progressions}) with distinct-degree polynomials \cite{peluse_2020}. In the finite-field setting, it has led to bounds for subsets of $\FF_q$ lacking the progression
\begin{align*}
    \{x,  x+z,  ...,  x+(t-1)z,  x + z^t,  ...,  x + z^{t+k-1}\}
\end{align*}
for fixed $t, k\in\N_+$ of essentially the same shape as the bounds for subsets lacking just $t$-term arithmetic progressions \cite{kuca_2020}. 

An analogous question has also been examined for multidimensional configurations. Han, Lacey, and Yang have found non-trivial patterns of the form $$\{x,  x + (P_{1}(z),  0),  x + (0,  P_{2}(z))\}$$ inside sets $A \subset \FF_{p}^2$ with $|A| \geq p^{2 - 1/16}$, and where $P_{1}, P_{2}$ are two fixed polynomials of distinct degrees \cite{MR4304416}. Their result has been generalised by the first author, who has proved that all subsets $A\subset\FF_q^d$ lacking the progression
\begin{align*}
    \{x,  x + v_1 P_1(z),  ...,  x + v_t P_t(z)\}
\end{align*}
for fixed direction vectors $v_1, ..., v_t \in \ZZ^d$ and distinct-degree polynomials $P_1, ..., P_t\in\ZZ[z]$ have size at most $|A|\leq C p^{d-\eps}$ \cite{kuca_2021}. The methods used in finite fields are completely different from the methods of Theorem \ref{mainThm}. To highlight this difference, in Section \ref{section on finite fields} we also give the proof of the following analogue of Theorem \ref{mainThm} in finite fields:
\begin{thm}\label{finite field bound}
Let $q = p^n$ be a prime power for a prime $p>2$, and let $\FF_q$ be the finite field with $q$ elements. Suppose that $A\subset\FF_q^2$ has cardinality $|A|\geq 2 q^\frac{3}{2}$. Then, there exists $x \in A$ and $z \neq 0$ such that $x + (z,  z^2) \in A$.
\end{thm}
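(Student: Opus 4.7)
The plan is to apply a standard Fourier-analytic counting argument over $\FF_q^2$. Writing $f = \1_A$, I would introduce the counting function
\begin{equation*}
T(A) := \sum_{z \in \FF_q} \sum_{x \in \FF_q^2} f(x) f(x + (z, z^2)),
\end{equation*}
whose ``diagonal'' contribution from $z = 0$ equals exactly $|A|$, so it suffices to show $T(A) > |A|$. Fix a nontrivial additive character $\psi$ on $\FF_q$ (e.g.\ $\psi(x) = e^{2\pi i \mathrm{Tr}(x)/p}$) and define $\hat f(\xi) = \sum_x f(x) \psi(-\xi \cdot x)$. Applying Plancherel to the inner sum and then summing in $z$ rewrites
\begin{equation*}
T(A) \;=\; \frac{1}{q^2} \sum_{\xi \in \FF_q^2} |\hat f(\xi)|^2 \, S(\xi), \qquad S(\xi) \;:=\; \sum_{z \in \FF_q} \psi(\xi_1 z + \xi_2 z^2).
\end{equation*}
The problem is thereby reduced to estimating the one-variable exponential sum $S(\xi)$.

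I would then dispose of $S(\xi)$ case by case. For $\xi = 0$ one has $S(0) = q$, giving the main term $|A|^2/q$. For $\xi_2 = 0$ and $\xi_1 \neq 0$ the sum is a nontrivial character sum over $\FF_q$ and vanishes. For $\xi_2 \neq 0$, the assumption $p > 2$ lets me complete the square, identifying $S(\xi)$ with a phase times the classical quadratic Gauss sum $\sum_w \psi(\xi_2 w^2)$ of magnitude exactly $\sqrt{q}$. Combined with Plancherel $\sum_\xi |\hat f(\xi)|^2 = q^2 |A|$, this yields
\begin{equation*}
T(A) \;\geq\; \frac{|A|^2}{q} - \sqrt{q}\, |A|,
\end{equation*}
so the number of nontrivial configurations is at least $|A|\bigl(|A|/q - \sqrt{q} - 1\bigr)$. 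The hypothesis $|A| \geq 2 q^{3/2}$ forces $|A|/q \geq 2\sqrt{q} > \sqrt{q} + 1$, and the bound is strictly positive, producing the required $z \neq 0$ with $x, x + (z, z^2) \in A$.

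The only part that requires any genuine thought is the Gauss-sum bound $|S(\xi)| \leq \sqrt{q}$ for $\xi_2 \neq 0$, which is immediate from completing the square in odd characteristic; no degree-lowering, $U^s$-inverse machinery, or algebraic-geometric input is needed. This simplicity is precisely what makes the finite-field statement so much cleaner than Theorem \ref{mainThm}, and it explains why the hypothesis is of the ``square-root barrier'' form $|A| \gtrsim q^{3/2}$, coming from the single quadratic term $z^2$. The same sketch would in fact extend without change to patterns $x + (z, P(z))$ for any $P \in \FF_q[z]$ of degree $\geq 2$, as only the nonvanishing of the leading coefficient of $P$ is used in the Gauss-sum step.
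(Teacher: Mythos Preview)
Your proof is correct and follows essentially the same Fourier-analytic argument as the paper: both expand the counting function via Plancherel, isolate the main term at $\xi = 0$, and bound the remaining frequencies using the quadratic Gauss-sum estimate $\bigl|\sum_{z} \psi(\xi_1 z + \xi_2 z^2)\bigr| \le \sqrt{q}$ (obtained by completing the square in odd characteristic) together with Parseval. The only cosmetic difference is that the paper packages the Gauss-sum computation as an estimate for $\widehat{1_\Pi}$, the Fourier transform of the indicator of the parabola, rather than for your $S(\xi)$ directly.
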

We remark that a weaker version of Theorem \ref{finite field bound} with bound $q^{2-\epsilon}$ for some $\eps > 0$ can be deduced from \cite{MR4304416,kuca_2020}. Theorem \ref{finite field bound} allows for a much more succinct proof than Theorem \ref{mainThm} thanks to the availability of standard estimates of quadratic sums over finite fields. Moreover, the exponent $\frac{3}{2} = 2 -\frac{1}{2}$ that we get in Theorem \ref{finite field bound} corresponds to $\eps = \frac{1}{2}$, which - as indicated before - we expect to be the supremum of the values of $\eps$ for which Theorem \ref{mainThm} works. See Section \ref{s:acknowledgements} for further remarks.

The question of finding the optimal bound in Theorem \ref{finite field bound} is wide open, but there are reasons to suspect that the bound in Theorem \ref{finite field bound} is not optimal, at least when $q$ is prime. In the single dimensional version of the problem, a Fourier analytic argument similar to one used in the proof of Theorem \ref{finite field bound} shows that for every prime power $q$, all subsets of $\FF_q$ of cardinality at least $q^{1/2}$ contain points $x, x+z^2$ for some $x,z\in\FF_q$ with $z\neq 0$. At the same time, it is known that for infinitely many primes $p$, there exists a subset $A\subseteq\FF_p$ of cardinality $|A|\gtrsim \log p \log\log\log p$ lacking such a configuration \cite{graham_ringrose_1990}. Taking the preimage $B=\FF_p \times A$ of this set $A$ inside $\FF_p^2$, it follows that for infinitely many primes $p$, there exists a set $|B|\gtrsim p \log p \log\log\log p$ lacking $x, x+(z, z^2)$ for any $z\neq 0$. This shows a big disparity between the upper and lower bounds, and it is quite likely that the lower bound, coming from an explicit construction, is closer to the truth.

In parallel with the study of Szemer\'edi and Roth type theorems for subsets of integers and finite fields, in articles \cite{MR853455,MR942826} Bourgain initiated the study of these questions for subsets of the real line. Theorem \ref{mainThm} can be viewed as a relative of the "non-linear Roth theorem" of Bourgain \cite{MR942826} from 1988: a Lebesgue measurable set $K \subset [0,1]$ with measure $\mathcal{L}^{1}(K) = \epsilon > 0$ contains a triple $\{x,  x + z,  x + z^{2}\}$ for some $|z| \geq \delta(\epsilon) > 0$. In Bourgain's proof, however, $\delta(\epsilon)$ decays super-exponentially fast as $\epsilon \to 0$. This means that the assumption $\mathcal{L}^{1}(K) = \epsilon$ cannot be easily relaxed to $\Hd K \geq 1 - \epsilon$. The same is true in a recent work of Durcik, Guo, and Roos \cite{MR3939567}, where the authors generalise Bourgain's theorem to progressions of the form $\{x,  x + z,  x + P(z)\}$, for any polynomial $P$ of degree $\geq 2$. Following Bourgain's approach, we could obtain a version of Theorem \ref{mainThm}, where we assume $\mathcal{L}^{2}(K) \geq \epsilon > 0$, and conclude the existence of $\{x,  x + (z,z^{2})\} \subset K$ with $|z| \geq \delta(\epsilon)$. Indeed, the key novelty of Theorem \ref{mainThm} is to relax the assumption $\mathcal{L}^{2}(K) \geq \epsilon$ to $\Hd K \geq 2 - \epsilon$. 

There are also several existing works in the fractal regime, that is, for sets $K \subset \R^{d}$ with $\Hd K < d$, see for example the papers \cite{MR2545245} of {\L}aba and Pramanik, Chan and {\L}aba \cite{MR3481177}, Henriot, {\L}aba, and Pramanik \cite{MR3531369}, and Fraser, Guo, and Pramanik \cite{2019arXiv190411123F}. In each of these papers, the authors prove the existence of progressions (of various kinds) in subsets $K \subset \R^{d}$ of dimension $\Hd K \geq d - \epsilon$, which additionally support a measure with sufficiently rapid Fourier decay (therefore the \emph{Fourier dimension} of $K$ needs to be positive). The paper \cite{2019arXiv190411123F}, in particular, proves the existence of triplets $\{x,  x + z,  x + P(z)\}$, where $P$ is a polynomial without constant term, and degree at least $2$ under positive Fourier dimension assumption. We emphasise that in Theorem \ref{mainThm} we do not impose any Fourier dimension assumption on the set $K$. In a different direction, Yavicoli \cite{2019arXiv191010057Y} proves the existence of arithmetic progressions (and more general configurations) in subsets of $\R$ with sufficiently high \emph{thickness}; notably, even sets with dimension less than one can have sufficiently high thickness for her results to apply. Falconer and Yavicoli \cite{2021arXiv210201186F} have thereafter obtained analogous results in $\R^{d}$.

We also mention a recent paper \cite{MR4295087} of Christ, Durcik, and Roos, where the authors find patterns of the form $\{(x,  y),  (x + z,  y),  (x,  y + z^{2})\} \subset K$ inside Lebesgue measurable sets $K \subset \R^{2}$ with $\mathcal{L}^{2}(K) \geq \epsilon > 0$. The authors also prove the gap estimate $|z| > \exp(-\exp(\epsilon^{-C}))$. This result is relevant for Theorem \ref{mainThm}, because $(x,  y + z^{2}) = (x + z,  y) + (-z,  (-z)^{2})$. In other words, the case $\mathcal{L}^{2}(K) \geq \epsilon > 0$ of Theorem \ref{mainThm} would follow immediately from \cite{MR4295087} (it should be emphasised, however, that the case $\mathcal{L}^{2}(K) \geq \epsilon$ of Theorem \ref{mainThm} is easy, whereas finding the "corners" $\{(x,  y),  (x + z,  y),  (x,  y + z^{2})\}$ requires rather sophisticated harmonic analysis technology).

Finally, there is also a large body of literature of constructing surprisingly large subsets of $\R^{d}$ which avoid various patterns and progressions. Keleti \cite{MR2431353} has constructed subsets of full dimension on $\R$ which contain no $3$-term arithmetic progressions (or more generally similar copies of an \emph{a priori} fixed set with $3$ elements). For extensions and generalisations, and more results in this direction, see the papers \cite{MR3016405} of Maga, \cite{MR3672917} of M\'ath\'e, \cite{MR4238597} of Denson, {\L}aba, and Zahl, and \cite{MR4305962} of Yavicoli. Quite surprisingly, one may even construct subsets of $\R$ with full Hausdorff dimension \textit{and} Fourier dimension which avoid $3$-term arithmetic progressions: this counterintuitive construction is due to Shmerkin \cite{MR3658188}. All the constructions cited here are quite delicate, and "random" fractals would not work: for finding patterns in random fractals, see the paper \cite{MR4101330} by Shmerkin and Suomala. 


\subsection{Proof outline} The following proposition gives a sufficient condition for checking the validity of Theorem \ref{mainThm}. The proof is short and standard, so we give it right away.

\begin{proposition}\label{auxProp} Let $K \subset \R^{2}$ be compact, and let $\mu$ be a Radon probability measure with $\spt(\mu) \subset K$. Let $\psi \in \mathcal{S}(\R^{2})$ be a non-negative Schwartz function, and let $\pi$ be a Radon measure supported on $\{(z,z^{2}) : |z| > 0\}$. Assume that
\begin{displaymath} \liminf_{\delta \to 0} \int (\mu \ast \psi_{\delta}) \ast \pi \, d\mu > 0, \end{displaymath}
where $\psi_{\delta}(x) := \delta^{-2}\psi(x/\delta)$. Then, there exists $x \in K$ and $z \neq 0$ such that $x + (z,z^{2}) \in K$.
\end{proposition}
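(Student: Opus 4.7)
The plan is to proceed by contradiction. Assume no pair $x \in K$, $z \neq 0$ with $x + (z, z^{2}) \in K$ exists. Then for every pair $x, y \in K$, the difference $y - x$ cannot lie in $\{(z, z^{2}) : z \neq 0\}$, and in particular not in $\spt(\pi)$. Thus $(K - K) \cap \spt(\pi) = \emptyset$. Since $K$ is compact, $K - K$ is compact; since $\spt(\pi)$ is closed, the disjoint compact-vs-closed pair has strictly positive separation
\[ d := \dist(K - K, \spt(\pi)) > 0. \]
This is the only geometric input needed, and encapsulates the ``no configuration'' assumption in quantitative form.

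Next, I would unfold the hypothesis integral via Fubini as the triple integral
\[ I_{\delta} := \int (\mu \ast \psi_{\delta}) \ast \pi \, d\mu = \iiint \psi_{\delta}(y - x - w) \, d\mu(x) \, d\mu(y) \, d\pi(w), \]
and show directly that $I_{\delta} \to 0$ as $\delta \to 0^{+}$, contradicting the hypothesis. Fix $R > 0$ with $K \subset B(0, R)$. For $(x, y) \in K \times K$, the vector $y - x$ lies in $K - K$, so for $w \in \spt(\pi)$ with $|w| \leq 2R$ we have $|y - x - w| \geq d$, whereas for $|w| > 2R$ we have $|y - x - w| \geq |w|/2$. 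Combined with the Schwartz decay estimate $\psi_{\delta}(u) \leq C_{N} \delta^{N - 2} |u|^{-N}$, valid for any $N$ and any $u \neq 0$, this gives
\[ I_{\delta} \leq C_{N} \, \delta^{N - 2} \Big( d^{-N} \pi(\overline{B(0, 2R)}) + 2^{N} \int_{|w| > 2R} |w|^{-N} \, d\pi(w) \Big). \]
Taking $N \geq 3$ then forces $I_{\delta} \to 0$, which is the required contradiction.

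The only real technicality is ensuring that the bracketed expression above is finite. The first term is harmless since $\pi$ is Radon. The second term requires that $\pi$ have at most polynomial growth of some finite order; this is in fact automatic from the hypothesis, since the very integrand $(\mu \ast \psi_{\delta}) \ast \pi$ must define a finite function somewhere for $I_{\delta}$ to be anything other than identically $+\infty$, and the compactly supported Schwartz function $\mu \ast \psi_{\delta}$ convolved with $\pi$ picks up such a polynomial tail bound in the standard way. So the hard part here is not really hard: the proof is a soft contradiction argument, and what one must verify carefully is only that the Schwartz tail of $\psi_{\delta}$ beats any fixed polynomial growth of $\pi$ coming from the far part of the parabola $\{(z, z^{2}) : z \neq 0\}$.
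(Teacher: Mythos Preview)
Your proof is correct and is the contrapositive reformulation of the paper's argument. Both proofs rest on the same mechanism: the Schwartz decay of $\psi_\delta$ kills any contribution coming from points $y - x$ that stay bounded away from $\spt(\pi)$. The paper proceeds constructively --- it selects $x_\delta \in K$ with $((\mu \ast \psi_\delta) \ast \pi)(x_\delta) > L/2$, shows that the portion of the integral over $\{y : \dist(x_\delta - y,\spt(\pi)) > \sqrt{\delta}\}$ is $O(\delta)$, and hence locates $y_\delta \in K$ with $\dist(x_\delta - y_\delta,\spt(\pi)) \leq \sqrt{\delta}$; a compactness argument on $K \times K$ then produces the limiting pair. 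Your contradiction approach sidesteps the subsequence extraction entirely: once $d = \dist(K - K,\spt(\pi)) > 0$ is in hand, the entire triple integral is $O(\delta^{N-2})$ and you are done. This is a marginally cleaner packaging of the same idea.

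Two small remarks. First, the inequality $|y - x - w| \geq |w|/2$ for $|w| > 2R$ is not quite right: since $|y - x| < 2R$ you only get $|y - x - w| > |w| - 2R$, which exceeds $|w|/2$ only once $|w| \geq 4R$; split at $4R$ instead and absorb the annulus $2R < |w| \leq 4R$ into the first term via the $d$-bound. Second, your concern about the polynomial growth of $\pi$ is legitimate but not specific to your route --- the paper's own proof silently uses $\|\pi\| < \infty$ when it estimates $\delta^{-2}\int (\delta/|z - w|)^6 \, d\pi(w) \leq \delta$. In the only application (the measure $\pi$ of Lemma~\ref{lemma1} is the length measure on a compact arc) this is harmless for both arguments.
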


\begin{proof} Let $K,\mu,\psi,\pi$ be the objects in Proposition \ref{auxProp}. Write 
\begin{displaymath} L := \liminf_{\delta \to 0} \int (\mu \ast \psi_{\delta}) \ast \pi \, d\mu > 0, \end{displaymath}
and let $\delta > 0$ be so small that $\int (\mu \ast \psi_{\delta}) \ast \pi \, d\mu > L/2$. Then, since $\mu$ is a probability measure, there exists a point $x_{\delta} \in \spt (\mu) \subset K$ such that $(\mu_{\delta} \ast \pi)(x_{\delta}) > L/2$. Since convolution is associative and commutative, we have 
\begin{displaymath} (\mu \ast \psi_{\delta}) \ast \pi = \mu \ast \pi_{\delta} \quad \text{with} \quad \pi_{\delta} = \pi \ast \psi_{\delta}. \end{displaymath}
Hence
\begin{align} L/2 < (\mu \ast \pi_{\delta})(x_{\delta}) = \int \pi_{\delta}(x_{\delta} - y) \, d\mu(y) & = \int_{\{\dist(x_{\delta} - y,\spt (\pi)) > \sqrt{\delta}\}} \pi_{\delta}(x_{\delta} - y) \, d\mu(y) \notag\\
&\label{form38} + \int_{\{\dist(x_{\delta} - y,\spt (\pi)) \leq \sqrt{\delta}\}} \pi_{\delta}(x_{\delta} - y) \, d\mu(y). \end{align}
We claim that the first integral tends to zero as $\delta \to 0$. Indeed, for every $z := x_{\delta} - y$ with $\dist(z,\spt (\pi)) > \sqrt{\delta}$, we have $|z - w| \geq \sqrt{\delta}$ for all $w \in \spt (\pi)$, and hence
\begin{displaymath} \pi_{\delta}(z) \leq \delta^{-2} \int \psi((z - w)/\delta) \, d\pi(w) \lesssim \delta^{-2} \int \left(\frac{\delta}{|z - w|} \right)^{6} \, d\pi(w) \leq \delta, \end{displaymath}
using the rapid decay of $\psi \in \mathcal{S}(\R^{2})$. Consequently also
\begin{displaymath} \int_{\{\dist(x_{\delta} - y,\spt (\pi)) > \sqrt{\delta}\}} \pi_{\delta}(x_{\delta} - y) \, d\mu(y) \lesssim \delta. \end{displaymath}
For $\delta$ much smaller than $L/4$, it now follows that $\eqref{form38} \geq L/4$, and consequently we may find a point $y_{\delta} \in \spt (\mu) \subset K$ such that
\begin{displaymath} \dist(x_{\delta} - y_{\delta},\spt (\pi)) \leq \sqrt{\delta}. \end{displaymath}
For $\delta > 0$ sufficiently small, we have 
\begin{equation}\label{form39} |x_{\delta} - y_{\delta}| \geq \dist(0,\spt (\pi)) - \dist(x_{\delta} - y_{\delta},\spt (\pi)) \geq \tfrac{1}{2}\dist(0,\spt (\pi)) > 0. \end{equation}
Now $K \times K \subset \R^{4}$ is compact, so we may choose a subsequence of $(x_{\delta},y_{\delta})_{\delta > 0}$ which converges to a point $(x,y) \in K \times K$. We have $x \neq y$ by \eqref{form39}. Moreover, 
\begin{displaymath} \dist(x - y,\spt (\pi)) = \lim_{\delta \to 0} \dist(x_{\delta} - y_{\delta},\spt (\pi)) = 0. \end{displaymath}
Hence $x = y + \gamma$ for some $\gamma \in \spt (\pi) \subset \{(z,z^{2}) : z \in \R\}$. This is what we claimed. \end{proof}

The proof of Theorem \ref{mainThm} will proceed as follows. In Section \ref{sec:gap}, we give a sufficient condition for the integral in Proposition \ref{auxProp} to be positive, see Lemma \ref{lemma1}. The key condition is that $\mu$ has a sufficiently large \textit{"spectral gap"}: this roughly means that $|\widehat{\mu}(\xi)| \leq \epsilon$ for all $\xi \in \R^{2}$ with $A \leq |\xi| \leq B$, but one has to be careful in choosing the constants $A,B,\epsilon$ in the right order. It is certainly well-known that such a hypothesis on $\mu$ would be useful in proving Theorem \ref{mainThm}. The main novelty of the paper lies in Section \ref{s:construction}, where a suitable measure $\mu$ is eventually constructed, see Proposition \ref{mainProp}. The idea is not to construct $\mu$ directly on $K$, but rather find a suitable dyadic box $Q$ where the density of $K$ is sufficiently high, construct $\mu$ inside $K \cap Q$, and renormalise.

This approach has one problem: the parabola $\{(z,z^{2}) : z \in \R\}$ looks increasingly flat inside small dyadic boxes, so our problem is not "invariant under renormalisation". To solve this, we consider parabolic dyadic boxes $Q$ instead of Euclidean ones. Then, the problem of finding pairs $x,x + (z,z^{2})$ inside $\R^{2}$ is equivalent to finding such pairs inside $K \cap Q$, and now the increased density of $K$ inside $Q$ becomes truly helpful. 

A small price to pay for using parabolic boxes is that the natural assumption of $K$ is no longer $\Hd K \geq 2 - \epsilon$, but rather $\Hd^{\Pi} K \geq 3 - \epsilon$, where $\Hd^{\Pi} K$ stands for the Hausdorff dimension of $K$ in the parabolic metric. Fortunately, $\Hd^{\Pi} K \geq 2\Hd K - 1$, so our initial assumption $\Hd K \geq 2 - \epsilon$ implies $\Hd^{\Pi} K \geq 3 - 2\epsilon$. The proof of Theorem \ref{mainThm} is finally concluded in Section \ref{sec:conclusion}. 

Given the connection to the parabolic metric, the following question seems reasonable:
\begin{question} Let $K \subset \R^{2}$ be a compact set with $\Hd^{\Pi} K > 2$. Does there exists a point $x \in K$ such that $x + (z,z^{2}) \in K$ for some $z \neq 0$? \end{question}

\subsection{Notations}


Basic notation we use throughout the article include:
\begin{itemize}
\item For $A,B \geq 0$, the notation $A \lesssim B$ means there exists an absolute constant $C > 0$ such that $A \leq C B$. If we allow the constant $C$ to depend on a parameter "$p$", we indicate this in the notation by writing $A \lesssim_{p} B$. We also let $A \sim B$ if $B\lesssim A \lesssim A$ and $A \sim_p B$ if $B\lesssim_p A \lesssim_p B$. 
\item $\Hd K$ is the Euclidean Hausdorff dimension of a set $K \subset \R^{d}$ (see \cite{zbMATH01249699} for a definition), and $\mathcal{L}^{d}$ is the $d$-dimensional Lebesgue measure on $\R^{d}$. We will also encounter the parabolic Hausdorff dimension $\Hd^{\Pi} K$ for $K \subset \R^{2}$, see Section \ref{s:construction}.
\item We write $\widehat{\mu}(\xi) = \int e^{-2\pi i x \cdot \xi} d\mu(x)$, $\xi \in \R^{d}$, for the Fourier transform of a finite Borel measure $\mu$ on $\R^{d}$. We also denote $\|\mu\|$ the total variation norm of a Borel measure $\mu$ on $\R^d$, which for non-negative measures equals to $\mu(\R^d)$
\item $\mathcal{S}(\R^{d})$ is the family of all Schwartz functions on $\R^{d}$.
\end{itemize}

\subsection{Acknowledgements}\label{s:acknowledgements} We are grateful to Sebastiano Nicolussi Golo for pointing out a (fortunately harmless) gap in our initial proof of Lemma \ref{lemma1}. We also thank Pham Van Thang for suggesting a shorter proof of Theorem \ref{finite field bound} with better bounds. Finally, we thank Alex Iosevich for pointing out Theorem \ref{finite field bound} can also be deduced from the methods of the paper \cite{MR2336319}.

\section{Measures with a spectral gap}\label{sec:gap}

In this section, $\psi \in \mathcal{S}(\R^{2})$ is a fixed non-negative Schwartz function with the properties
\begin{equation}\label{form46} \psi(0) = 1 \quad \text{and} \quad |\hat{\psi}(\xi) - \hat{\psi}(0)| \lesssim |\xi|^{2}. \end{equation}
For example, the $2$-dimensional Gaussian $\psi(x) = e^{-\pi |x|^{2}}$ satisfies \eqref{form46}. From \eqref{form46}, it follows that there exists a constant $c_{\psi} > 0$ such that 
\begin{equation}\label{form41} \psi(x) \geq \tfrac{1}{2}, \qquad |x| \leq c_{\psi}, \end{equation}
and the implicit constants in the "$\lesssim$" notation of this section will be allowed to depend on this constant "$c_{\psi}$". 

Whenever $\mu$ is a finite Borel measure on $\R^{2}$, we will in this section use the notation $\mu_\delta = \mu \ast \psi_\delta$, where $\psi_\delta(x) = \delta^{-2} \psi(x/\delta)$, $x \in \R^2$.

We recall that the \emph{$\sigma$-dimensional Riesz energy} of a Borel measure $\mu$ on $\R^{d}$ is
\begin{displaymath} I_{\sigma}(\mu) := \iint \frac{d\mu(x) d\mu (y)}{|x - y|^{\sigma}}, \qquad \sigma \geq 0. \end{displaymath}
We also recall, see for example \cite[Lemma 12.12]{zbMATH01249699}, that the Riesz energy of a compactly supported Borel measure $\mu$ can be expressed in terms of the Fourier transform as follows:
\begin{equation}\label{riesz} I_{\sigma}(\mu) = c(d,\sigma) \int |\hat{\mu}(\xi)|^{2}|\xi|^{\sigma - d} \, d\xi, \qquad 0 < \sigma < d, \end{equation}
where $c(d,\sigma) > 0$. We also record here the easy fact that if $\mu$ is a Borel measure on $\R^{d}$ satisfying $\|\mu\| \leq C$, and $\mu(B(x,r)) \leq Cr^{t}$ for some constants $C > 0, t \in [0,d]$, and for all $x \in \R^{d}$ and $r > 0$, then $I_{s}(\mu) \lesssim_{C,s,t} 1$ for all $0 \leq s < t$. For a proof, see \cite[Section 8]{zbMATH01249699}. 

 We then start the proof of Theorem \ref{mainThm} with the following lemma: it gives sufficient conditions for the integral in Proposition \ref{auxProp} to be positive.

\begin{lemma}\label{lemma1} Let $\mathbf{C} \geq 1$ and $3/2 < \sigma < 2$. Then there exist an absolute constant $A_{0} \geq 1$ such that the following holds for all $A \geq A_{0}$, and all $B \geq B_{0}(A,\mathbf{C},\sigma)$. Let $\mu$ be a Borel probability measure on $[0,1]^{2}$ such that $I_{\sigma}(\mu) \leq \mathbf{C}$. Assume additionally that
\begin{equation}\label{form22} \int_{\sqrt[5]{A} \leq |\xi| \leq B^{2}} |\hat{\mu}(\xi)|^{2} \, d\xi \leq A^{-2}   \end{equation}
Let $\pi$ be the length measure on the truncated parabola $\{(z,z^{2}) : A^{-2} \leq |z| \leq 1\}$. Then
\begin{equation}\label{form23} \int \mu_{\delta} \ast \pi \, d\mu \geq A^{-2} \end{equation}
for all sufficiently small $\delta > 0$. \end{lemma}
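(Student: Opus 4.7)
The plan is to pass to the Fourier side via Parseval. Using $\widehat{\mu \ast \tilde\mu}(\xi) = |\hat\mu(\xi)|^{2}$, where $\tilde\mu(E) := \mu(-E)$, together with $\widehat{\psi_\delta}(\xi) = \hat\psi(\delta\xi)$, we obtain
$$\int \mu_\delta \ast \pi \, d\mu = \int |\hat\mu(\xi)|^{2} \hat\psi(\delta\xi) \hat\pi(\xi)\, d\xi.$$
Since the left-hand side is real, we may replace $\hat\pi$ by $\operatorname{Re}\hat\pi$, which is the Fourier transform of the symmetrised length measure $\pi_s$ on $\{(z,\pm z^{2}) : A^{-2} \leq |z| \leq 1\}$. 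This symmetrisation is harmless at the level of Theorem~\ref{mainThm}, because a configuration $x + (-z,(-z)^{2}) \in K$ with $x \in K$ is still a non-trivial parabolic pair. We then split the Fourier integral according to the spectral-gap thresholds:
$$\mathrm{I}_{\mathrm{low}} = \int_{|\xi| \leq A^{1/5}}, \quad \mathrm{I}_{\mathrm{mid}} = \int_{A^{1/5} \leq |\xi| \leq B^{2}}, \quad \mathrm{I}_{\mathrm{hi}} = \int_{|\xi| > B^{2}}.$$

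The middle region is handled directly by the hypothesis \eqref{form22}: the pointwise bounds $|\hat\psi(\delta\xi)| \leq \|\psi\|_{1}$ and $|\hat\pi(\xi)| \leq \|\pi\| \lesssim 1$ reduce $|\mathrm{I}_{\mathrm{mid}}|$ to the assumed integral, giving $|\mathrm{I}_{\mathrm{mid}}| \lesssim A^{-2}$. The high-frequency region uses the classical stationary-phase estimate $|\hat\pi(\xi)| \lesssim (1 + |\xi|)^{-1/2}$ — valid because the parabola has non-vanishing curvature — combined with the Riesz-energy identity \eqref{riesz}. On $\{|\xi| \geq B^{2}\}$ one has $|\xi|^{-1/2} \leq B^{3-2\sigma} |\xi|^{\sigma - 2}$ since $\sigma > 3/2$, so $|\mathrm{I}_{\mathrm{hi}}| \lesssim_\sigma B^{3-2\sigma} \mathbf{C}$, which can be made $\leq A^{-2}/4$ by choosing $B \geq B_0(A,\mathbf{C},\sigma)$ sufficiently large.

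The crux of the argument is the low region $\mathrm{I}_{\mathrm{low}}$, where neither the spectral gap nor the decay of $\hat\pi$ helps. The plan here is to extract a positive main term from a small neighbourhood $|\xi| \leq r_0$ of the origin. Because $\mu \subset [0,1]^{2}$ is a probability measure, one has $|\hat\mu(\xi)|^{2} = 1 + O(|\xi|^{2})$; the bound $|\cos(2\pi \xi \cdot (z,z^{2})) - 1| \lesssim |\xi|^{2}$ yields $\hat\pi(\xi) = \hat\pi(0) + O(|\xi|^{2})$; and \eqref{form46} gives $\hat\psi(\delta\xi) = \hat\psi(0) + O(\delta^{2}|\xi|^{2})$. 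Combining these shows the integrand on $\{|\xi| \leq r_0\}$ is at least $\hat\psi(0)\hat\pi(0) - O(|\xi|^{2})$, so the main term is bounded below by $\pi r_0^{2} \hat\psi(0)\hat\pi(0) - O(r_0^{4})$, a positive absolute constant once $r_0$ is fixed, using that $\hat\pi(0) = \|\pi\| \geq 1$ for $A \geq A_0$. The remaining annular piece $\{r_0 \leq |\xi| \leq A^{1/5}\}$ is bounded by splitting at $|\xi| = 1$: on $\{r_0 \leq |\xi| \leq 1\}$ the oscillation bound $|\hat\pi(\xi) - \hat\pi(0)| \lesssim |\xi|^{2}$ combined with $|\xi|^{2} \leq |\xi|^{\sigma - 2}$ (valid for $|\xi| \leq 1$, $\sigma \leq 2$) and the Riesz-energy bound $\int |\hat\mu|^{2} |\xi|^{\sigma-2} d\xi \lesssim \mathbf{C}$ gives an error of order $\mathbf{C}$; on $\{1 \leq |\xi| \leq A^{1/5}\}$ the decay $|\hat\pi(\xi)| \lesssim |\xi|^{-1/2} \leq |\xi|^{\sigma-2}$ (since $\sigma \geq 3/2$) again gives an error $\lesssim \mathbf{C}$. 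The main obstacle I expect to grapple with is the tight reconciliation of the $\mathbf{C}$-dependent annular errors with the $A$-independent main term; this is the precise point where the exponent $1/5$ and the absolute constant $A_{0}$ must be calibrated so that the net contribution from $\mathrm{I}_{\mathrm{low}}$ exceeds $2 A^{-2}$. Assembling everything then yields $\int \mu_\delta \ast \pi \, d\mu \geq \mathrm{I}_{\mathrm{low}} - |\mathrm{I}_{\mathrm{mid}}| - |\mathrm{I}_{\mathrm{hi}}| \geq A^{-2}$ for all sufficiently small $\delta$.
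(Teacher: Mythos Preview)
Your treatment of $\mathrm{I}_{\mathrm{mid}}$ and $\mathrm{I}_{\mathrm{hi}}$ is fine and matches what the paper does. The genuine gap is exactly the one you flag yourself in $\mathrm{I}_{\mathrm{low}}$, and it cannot be closed the way you hope. Your main term coming from $\{|\xi|\le r_0\}$ is a fixed positive number $c_0=c_0(r_0,\psi)$, independent of everything. Your bound on the annulus $\{r_0\le|\xi|\le A^{1/5}\}$ is $O(\mathbf{C})$, with an absolute implied constant. Since the lemma is stated with $\mathbf{C}\ge 1$ \emph{given first} and $A_0$ required to be an \emph{absolute} constant (only $B_0$ is allowed to depend on $\mathbf{C}$), you cannot arrange $c_0$ to beat $O(\mathbf{C})$: for large $\mathbf{C}$ the annular error swallows the main term. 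No calibration of $A_0$ or of the exponent $1/5$ helps, because neither $A$ nor the threshold $A^{1/5}$ appears in your main term, and on the annulus the only controls you have are $|\hat\mu|\le 1$ (giving a bound $\sim A^{3/10}$, worse still) or the energy bound (giving $\mathbf{C}$). The integrand is not sign-definite on the annulus since $\operatorname{Re}\hat\pi$ changes sign, so you cannot simply drop that piece either.

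The paper sidesteps this by extracting the main term \emph{in physical space}. It writes
\[
\int \mu_\delta\ast\pi\,d\mu=\int \mu_{1/A}\ast\pi\,d\mu+\int(\mu_{1/B}-\mu_{1/A})\ast\pi\,d\mu+\int(\mu_\delta-\mu_{1/B})\ast\pi\,d\mu,
\]
and shows the first integral is $\gtrsim A^{-1}$ by a purely measure-theoretic argument: a $5r$-covering lemma shows that the set of points $x$ with $\mu(B(x,r))\ge cr^2$ for all $0<r\le 1$ has $\mu$-measure $\ge\tfrac12$, and at each such point $(\mu_{1/A}\ast\pi)(x)\gtrsim\pi(B(0,c_\psi/A))\gtrsim A^{-1}$. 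This main term is independent of $\mathbf{C}$, which is the whole point. The two error terms are then handled on the Fourier side essentially as you do, with the $\mathbf{C}$-dependence confined to the high-frequency piece and absorbed into the choice of $B_0(A,\mathbf{C},\sigma)$. In short: your Fourier decomposition is the right framework for the error terms, but the positive main term has to come from a spatial density argument, not from a Taylor expansion of the Fourier integrand near the origin.
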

\begin{proof} Let $A_{0} \geq 1$ be an absolute constant to be selected later, at the very end of the proof, and let $A_{0} \leq A < B$. We will assume that 
\begin{equation}\label{rev3} B \geq B_{0}(A,\mathbf{C},\sigma) := (A\mathbf{C}C_{\sigma})^{5/(3/2 - \sigma)}, \end{equation}
where $C_{\sigma} \geq 1$ is a constant depending only on $\sigma \in (\tfrac{3}{2},2)$. To be accurate, $C_{\sigma}$ will also depend on the choice of "$\psi$". For $0 < \delta < B^{-1}$, write
\begin{displaymath} \int \mu_{\delta} \ast \pi \, d\mu = \int \mu_{1/A} \ast \pi \, d\mu + \int (\mu_{1/B} - \mu_{1/A}) \ast \pi \, d\mu + \int (\mu_{\delta} - \mu_{1/B}) \ast \pi \, d\mu =: I_{1} + I_{2} + I_{3}. \end{displaymath}
We first claim that
\begin{equation}\label{form43} I_{1} \geq \kappa \cdot A^{-1}, \end{equation}
where $\kappa > 0$ only depends on the choice of the fixed Schwartz function $\psi$. To see this, fix a small absolute constant $c > 0$, and assume temporarily that $x \in \spt (\mu)$ is a point with $\mu(B(x,r)) \geq cr^{2}$ for all $0 < r \leq 1$. Then, for $|y - x| \leq c_{\psi}/(2A)$, and using the non-negativity of $\psi$, we have
\begin{align*} \mu_{1/A}(y) = A^{2} \int \psi(A(y - z)) \, d\mu(z) & \stackrel{\eqref{form41}}{\geq} \tfrac{1}{2}A^{2} \mu(B(y,c_{\psi}/A))\\
&\,\, \geq\, \tfrac{1}{2}A^{2}\mu(B(x,c_{\psi}/(2A))) \gtrsim_{c,\psi} 1.  \end{align*}
Recalling that $\pi$ is the length measure on $\{(z,z^{2}) : A^{-2} \leq |z| \leq 1\}$, it follows that
\begin{displaymath} (\mu_{1/A} \ast \pi)(x) = \int \mu_{1/A}(x - y) \, d\pi(y) \gtrsim_{c,\psi} \pi(B(0,c_{\psi}/(2A))) \gtrsim_{\psi} A^{-1} \end{displaymath}
for every point $x \in \spt (\mu)$ with the uniform density lower bound $\mu(B(x,r)) \geq cr^{2}$. Now, to prove \eqref{form43}, it remains to note that if $c > 0$ is chosen sufficiently small, then these points have $\mu$ measure at least $\tfrac{1}{2}$. By the $5r$-covering theorem, the complement of these points in $\spt \mu$, say $\mathbf{Bad} \subset \spt \mu$, can be covered by discs $\{B(x_{j},5r_{j})\}_{j \in \N}$ such that $x_{j} \in \mathbf{Bad} \subset [0,1]^{2}$, $0 < 5r_{j} \leq 1$, the discs $B(x_{j},r_{j})$ are disjoint, and and $\mu(B(x,5r_{j})) \leq 25cr_{j}^{2}$ for all $j \geq 1$. Therefore,
\begin{displaymath} \mu(\mathbf{Bad}) \leq \sum_{j \in \N} \mu(B(x,5r_{j})) \leq 25c \sum_{j \in \N} r_{j}^{2} \sim c \sum_{j \in \N} \mathcal{L}^{2}(B(x_{j},r_{j})) \leq c\mathcal{L}^{2}(B(0,2)) \sim c. \end{displaymath} 
Now, if $c > 0$ is chosen sufficiently small, we see that $\mu(\mathbf{Bad}) \leq \tfrac{1}{2}$, as claimed. Therefore, \eqref{form43} follows from
\begin{displaymath} I_{1} \geq \int_{\R^{2} \, \setminus \mathbf{Bad}} \mu_{1/A} \ast \pi \, d\mu \gtrsim_{c,\psi} \mu(\R^{2} \, \setminus \, \mathbf{Bad}) \cdot A^{-1} \sim A^{-1}. \end{displaymath}
The terms $I_{2},I_{3}$ may be negative, but they are small compared to $1/A$. We start with $I_{2}$:
\begin{displaymath} I_{2} \leq \int |\hat{\mu}(\xi)|^{2}|\widehat{\psi}(\xi/B) - \widehat{\psi}(\xi/A)| |\hat{\pi}(\xi)| \, d\xi \lesssim \int |\widehat{\psi}(\xi/B) - \widehat{\psi}(\xi/A)||\hat{\mu}(\xi)|^{2} \, d\xi. \end{displaymath}
The integral over $\sqrt[5]{A} \leq |\xi| \leq B^{2}$ is bounded by $\lesssim A^{-2}$ by the spectral gap assumption \eqref{form22}. Using also that $\|\hat{\mu}\|_{L^{\infty}} \leq 1$, it suffices to find upper bounds for
\begin{displaymath} I_{2}^{'} := \int_{|\xi| \leq \sqrt[5]{A}} |\widehat{\psi}(\xi/B) - \widehat{\psi}(\xi/A)| \, d\xi \quad \text{and} \quad I_{2}^{''} := \int_{|\xi| \geq B^{2}} |\widehat{\psi}(\xi/B) - \widehat{\psi}(\xi/A)| \, d\xi. \end{displaymath}
Regarding $I_{2}'$, we simply estimate the integrand with the triangle inequality, and then plug in the estimate \eqref{form46}:
\begin{align*} |\widehat{\psi}(\xi/B) - \widehat{\psi}(\xi/A)| & \leq |\widehat{\psi}(\xi/B) - \widehat{\psi}(0)| + |\widehat{\psi}(\xi/A) - \widehat{\psi}(0)| \lesssim \left|\frac{\xi}{B}\right|^{2}  + \left|\frac{\xi}{A}\right|^{2} \lesssim A^{2/5 - 2}. \end{align*} 
Consequently,
\begin{equation}\label{rev1} I_{2}' \lesssim \int_{|\xi| \leq \sqrt[5]{A}} A^{2/5 - 2} \, d\xi \lesssim A^{-1.2}. \end{equation}
We turn to estimating $I_{2}''$, where usual Schwartz function bounds for $\widehat{\psi}$ are sufficient:
\begin{displaymath} I_{2}'' \leq \int_{|\xi| \geq B^{2}} |\widehat{\psi}(\xi/B)| \, d\xi + \int_{|\xi| \geq B^{2}} |\widehat{\psi}(\xi/A)| \, d\xi \lesssim B^{2}\int_{|\xi| \geq B} |\widehat{\psi}(\xi)| \, d\xi \lesssim_{\psi} B^{-2} \leq A^{-2}. \end{displaymath} 
We finally estimate term $I_{3}$:
\begin{displaymath} I_{3} = \int |\hat{\mu}(\xi)|^{2}|\widehat{\psi}(\xi/B) - \widehat{\psi}(\delta \xi)|^{2}|\hat{\pi}(\xi)| \, d\xi \lesssim I_{3}' + I_{3}'', \end{displaymath}
where 
\begin{equation}\label{rev2} I_{3}' := \int_{|\xi| \leq \sqrt[5]{B}} |\widehat{\psi}(\xi/B) - \widehat{\psi}(\delta \xi)| \, d\xi \lesssim B^{-1.2} \leq A^{-1.2} \end{equation}
by the same argument we used to estimate $I_{2}'$ (recall that $\delta < B^{-1}$), and
\begin{displaymath} I_{3}'' := \int_{|\xi| \geq \sqrt[5]{B}} |\hat{\mu}(\xi)|^{2}|\hat{\pi}(\xi)|^{2} \, d\xi. \end{displaymath}
Using $|\hat{\pi}(\xi)| \lesssim |\xi|^{-1/2}$, and $I_{\sigma}(\mu) \leq \mathbf{C}$, we find
\begin{align*} I_{3}'' & \lesssim \int_{|\xi| \geq \sqrt[5]{B}} |\hat{\mu}(\xi)|^{2}|\xi|^{-1/2} \, d\xi\\
& \leq (\sqrt[5]{B})^{3/2 - \sigma} \int_{|\xi| \geq \sqrt[5]{B}} |\hat{\mu}(\xi)|^{2}|\xi|^{\sigma - 2} \, d\xi \stackrel{\eqref{riesz}}{\lesssim_{\sigma}} (\sqrt[5]{B})^{3/2 - \sigma} \mathbf{C}, \end{align*}
Recalling the constant $\kappa = \kappa(\psi) > 0$ from \eqref{form43}, we therefore have $I_{3}'' \leq (\kappa/2) \cdot A^{-1}$ for $B \geq (A\mathbf{C}C_{\sigma})^{5/(3/2 - \sigma)}$, if $C_{\sigma} \geq 1$ is chosen sufficiently large in terms of $\sigma$ and $\kappa = \kappa(\psi)$. This is what we assumed in \eqref{rev3}. Finally, combining \eqref{form43}, \eqref{rev1}, and \eqref{rev2}, we find that
\begin{displaymath} \int \mu_{\delta} \ast \pi \, d\mu \geq (\kappa/2) \cdot A^{-1} - O(A^{-1.2}) - O(A^{-1.2}). \end{displaymath}
In particular the integral is $\geq A^{-2}$ for $A \geq A_{0}$ for a sufficiently large absolute constant $A_{0} \geq 1$; this is finally the place where the constant $A_{0}$ is chosen. \end{proof}

\section{Finding measures with a spectral gap}\label{s:construction}

In view of Proposition \ref{auxProp} and Lemma \ref{lemma1}, the first attempt to prove Theorem \ref{mainThm} might be to introduce a Frostman measure $\nu$ with $\spt (\nu) \subset K$, and $I_{\sigma}(\nu) < \infty$ for some $\sigma$ close to $\Hd K > 3/2$. The issue is that such a measure may not satisfy the "spectral gap" property \eqref{form22}. Locating a measure with property \eqref{form22} is the main challenge of the proof.

We introduce a few pieces of notation. The metric space $(\R^{2},d_{\Pi})$,
\begin{displaymath} d_{\Pi}((x,s),(y,t)) := \max\{|x - y|,|s - t|^{1/2}\}, \qquad (x,s),(y,t) \in \R^{2}, \end{displaymath}
is called the \emph{parabolic plane}. The parabolic plane admits a natural collection of \emph{dyadic rectangles} $\mathcal{D}$, which we now describe. We declare that $\mathcal{D}_{0} := \{x + [0,1)^{2} : x \in \Z^{2}\}$. For $j \geq 0$, rectangles in $\mathcal{D}_{j + 1}$ are obtained by sub-dividing each rectangle in $\mathcal{D}_{j}$ into two equal parts vertically, and four equal parts horizontally. Thus, for example 
\begin{displaymath} Q_{j} := [0,2^{-j}) \times [0,4^{-j}) \in \mathcal{D}_{j}, \end{displaymath}
and all elements of $\mathcal{D}_{j}$ are translates of $Q_{j}$. Finally, we write $\mathcal{D} := \bigcup_{j \geq 0} \mathcal{D}_{j}$. For $Q \in \mathcal{D}_{j} \subset \mathcal{D}$, we define 
\begin{displaymath} \ell(Q) := 2^{-j} = \diam_{\Pi}(Q). \end{displaymath}
For each rectangle $Q = [x,x + 2^{-j}) \times [s,s + 4^{-j})$, we define the \emph{rescaling map} $T_{Q} \colon Q \to [0,1)^{2}$, as follows:
\begin{equation}\label{TQ} T_{Q}(y,t) := (2^{j}(y - x),4^{j}(t - s)), \qquad (y,t) \in \R^{2}. \end{equation} 
Thus $T_{Q}(Q) = [0,1)^{2} \in \mathcal{D}_{0}$. If $\mu$ is a Borel measure on $\R^{2}$, and $\mu(Q) > 0$ for some $Q \in \mathcal{D}$, we defined the \emph{renormalised blow-up} 
\begin{displaymath} \mu^{Q} := \|\mu|_{Q}\|^{-1} \cdot T_{Q}(\mu|_{Q}). \end{displaymath}
Then $\mu^{Q}$ is a Borel probability measure with $\spt (\mu^{Q}) \subset [0,1]^{2}$. We will need a version of Frostman's lemma in the parabolic plane; it could be deduced from Frostman's lemma in metric spaces, but it is much easier to use the dyadic rectangles $\mathcal{D}$ in $(\R^{2},d_{\Pi})$, and follow the argument from Euclidean spaces, see \cite[Theorem 8.8]{zbMATH01249699}.
\begin{lemma}[Frostman's lemma]\label{frostman} Let $K \subset (\R^{2},d_{\Pi})$ be a compact set with $\mathcal{H}^{s}_{\infty}(K) =: \tau$. Here $\mathcal{H}^{s}_{\infty}$ refers to Hausdorff content defined relative to the metric $d_{\Pi}$. Then, there exists an absolute constant $C > 0$, and a Borel measure $\mu$ supported on $K$ with the properties
\begin{itemize}
\item $\|\mu\| \geq \tau$, and
\item $\mu(B_{\Pi}(x,r)) \leq Cr^{s}$ for all $x \in \R^{2}$ and $r > 0$.
\end{itemize}
\end{lemma}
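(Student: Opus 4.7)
The plan is to follow the standard dyadic construction of Frostman measures (as in \cite[Theorem 8.8]{zbMATH01249699}), with the Euclidean cubes replaced by the parabolic dyadic rectangles $\mathcal{D} = \bigcup_{j \geq 0} \mathcal{D}_j$ already defined in the paper. The two key structural features that make this substitution essentially cosmetic are: (i) each $Q \in \mathcal{D}_j$ satisfies $\diam_\Pi(Q) = 2^{-j} = \ell(Q)$; and (ii) any parabolic ball $B_\Pi(x,r)$ with $2^{-(j+1)} < r \leq 2^{-j}$ meets at most an absolute constant number $N_0$ of rectangles in $\mathcal{D}_j$. Fact (ii) follows from a direct counting argument, since $B_\Pi(x,r)$ is a Euclidean rectangle of dimensions $2r \times 2r^2$, comparable to the dimensions $2^{-j} \times 4^{-j}$ of the rectangles in $\mathcal{D}_j$.

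First I would reduce to the case where $K$ is contained in a single cube $R \in \mathcal{D}_0$ (using compactness and finiteness of the covering, splitting the mass across the finitely many cubes of $\mathcal{D}_0$ that hit $K$ and passing to the one with the largest share; this only costs a harmless constant factor that can be absorbed into $C$). Next, since $K$ is compact, I would fix a large integer $m \geq 0$ such that $K$ meets only finitely many $Q \in \mathcal{D}_m$. On this bottom level I initialise $\mu_m$ to be $\ell(Q)^s = 2^{-ms}$ times normalised Lebesgue measure restricted to each $Q \in \mathcal{D}_m$ that meets $K$ (or supported on $Q \cap K$ via any convenient auxiliary measure). Then I would proceed upward from $j = m$ to $j = 0$: having defined $\mu_j$ on each $Q \in \mathcal{D}_j$, I form $\mu_{j-1}$ by keeping $\mu_j$ on each $Q' \in \mathcal{D}_{j-1}$ whose total $\mu_j$-mass is $\leq \ell(Q')^s$, and otherwise rescaling $\mu_j|_{Q'}$ by the factor $\ell(Q')^s / \mu_j(Q') \leq 1$. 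The resulting $\mu_0$ is supported on a neighbourhood of $K$, and by construction satisfies the dyadic Frostman bound
\begin{equation*}
\mu_0(Q) \leq \ell(Q)^s \qquad \text{for every } Q \in \mathcal{D}.
\end{equation*}
Taking a weak-$*$ subsequential limit as $m \to \infty$ produces a Borel measure $\mu$ supported on $K$ inheriting the same dyadic bound.

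For the mass lower bound, I would use a standard stopping-time argument: given the final $\mu$, for each $Q \in \mathcal{D}$ that meets $K$, define it to be \emph{stopped} at the first (i.e. largest) scale at which the rescaling step was actually triggered, so that $\mu(Q) = \ell(Q)^s$ at all stopped cubes. The stopped cubes form a disjoint cover of $K \cap R$ by dyadic rectangles, and their diameters sum as $\sum \ell(Q)^s = \sum \mu(Q) = \mu(R) = \|\mu\|$. On the other hand, this collection is an admissible parabolic cover of $K$, so $\tau = \mathcal{H}^s_\infty(K) \leq \sum \ell(Q)^s$, giving $\|\mu\| \geq \tau$ (modulo an absolute constant coming from comparing the general parabolic Hausdorff content to its dyadic analogue, which one absorbs into $C$). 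Finally, the upper bound $\mu(B_\Pi(x,r)) \leq Cr^s$ is immediate from fact (ii) above together with $\mu(Q) \leq \ell(Q)^s$: choose $j$ with $2^{-(j+1)} < r \leq 2^{-j}$, cover $B_\Pi(x,r)$ by at most $N_0$ elements of $\mathcal{D}_j$, and bound each of them by $\ell(Q)^s \leq r^s$.

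The only genuinely non-routine point is the one at which Euclidean arguments would use the isoperimetric-type fact that balls are efficiently covered by dyadic cubes of the same scale; here the analogous statement is fact (ii), and once that is verified for the anisotropic boxes of $\mathcal{D}_j$, everything else is a mechanical transcription of the Euclidean argument. I would expect this verification, and the careful bookkeeping of absolute constants when comparing $\mathcal{H}^s_\infty$ to its dyadic counterpart, to be the main (but still modest) technical obstacle.
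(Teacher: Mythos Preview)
Your proposal is correct and matches the paper's approach exactly: the paper does not give a detailed proof of this lemma but simply says to follow \cite[Theorem 8.8]{zbMATH01249699} with the parabolic dyadic rectangles $\mathcal{D}$ in place of Euclidean cubes, which is precisely what you outline. The one cosmetic point is that any absolute constant arising from comparing the dyadic and metric Hausdorff contents can be moved to the Frostman upper bound by rescaling $\mu$, which is why the lemma is stated with $\|\mu\| \geq \tau$ exactly and the constant $C$ only in the ball estimate.
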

A measure $\mu$ satisfying the conclusions of Lemma \ref{frostman} is called a \emph{parabolic $s$-Frostman measure}. We record that every parabolic $s$-Frostman measure $\mu$ is a Euclidean $(s - 1)$-Frostman measure. Indeed, for $r > 0$, every Euclidean disc $B(x,r)$ can be covered by $\leq 5r^{-1}$ parabolic balls $B_{\Pi}(x_{j},r)$. Hence 
\begin{displaymath} \mu(B(x,r)) \leq 5r^{-1} \max_{j} \mu(B_{\Pi}(x_{j},r)) \leq 5Cr^{s - 1}. \end{displaymath}

We are now ready to state the key proposition:

\begin{proposition}\label{mainProp} There exists an absolute constant $\epsilon > 0$ such that the following holds. Let $K \subset \R^{2}$ be a compact set with parabolic Hausdorff dimension $\Hd^{\Pi} K > 3 - \epsilon$. Then, there exists a parabolic rectangle $\mathbf{Q} \in \mathcal{D}$ and a non-trivial measure $\mu_{\mathbf{Q}}$ with $\spt \mu_{\mathbf{Q}} \subset K \cap \mathbf{Q}$, such that the renormalised blow-up $\mu = (\mu_{\mathbf{Q}})^{\mathbf{Q}}$ satisfies the hypotheses of Lemma \ref{lemma1} for suitable absolute constants $A,B,\mathbf{C} > 0$, and $\sigma = 10/6 > 3/2$.
\end{proposition}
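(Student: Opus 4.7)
The strategy is a density-increment iteration in the parabolic dyadic system $\mathcal{D}$. I would start with a parabolic $s$-Frostman measure $\nu$ on $K$, produced by Lemma~\ref{frostman}, and maintain a nested sequence of parabolic rectangles $\mathbf{Q}_n \in \mathcal{D}$ on which the normalised density $D_n := \|\nu|_{\mathbf{Q}_n}\| / \ell(\mathbf{Q}_n)^s$ is non-decreasing. At each stage, I test whether the renormalised blow-up $\mu^{(n)} := \nu^{\mathbf{Q}_n}$ satisfies the spectral-gap hypothesis~\eqref{form22}. If it does, I set $\mathbf{Q} := \mathbf{Q}_n$ and $\mu_{\mathbf{Q}} := \nu|_{\mathbf{Q}_n}$ and stop. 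If not, the failure of~\eqref{form22} is to be converted into a parabolic dyadic descendant $\mathbf{Q}_{n+1} \subset \mathbf{Q}_n$ with $D_{n+1} \geq (1 + \kappa) D_n$ for some absolute $\kappa > 0$. Since $\nu$ is parabolic $s$-Frostman, the Frostman upper bound forces $D_n \leq C$ for every $n$, so the loop terminates in $O(\log C / \kappa)$ rounds.

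For the setup and the Riesz-energy hypothesis of Lemma~\ref{lemma1}, take $s := 3 - \epsilon$, where $\epsilon > 0$ is the absolute constant being constructed. The hypothesis $\Hd^{\Pi} K > 3 - \epsilon$ together with Lemma~\ref{frostman} supplies the desired parabolic $s$-Frostman $\nu$. Combining the observation after Lemma~\ref{frostman} that parabolic $s$-Frostman implies Euclidean $(s - 1)$-Frostman with the standard fact recorded before Lemma~\ref{lemma1} that Euclidean Frostman measures of exponent exceeding $5/3 = 10/6$ have bounded $10/6$-energy, one obtains $I_{10/6}(\nu) \leq \mathbf{C}$ for some absolute $\mathbf{C}$ whenever $\epsilon < 1/3$. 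A short change-of-variables calculation using the definition of the renormalised blow-up then shows that the same estimate $I_{10/6}(\nu^Q) \lesssim \mathbf{C}$ holds for every $Q$ with $D_Q \gtrsim 1$, so the energy hypothesis of Lemma~\ref{lemma1} is automatically maintained throughout the iteration.

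The heart of the argument is the density-increment implication itself. Suppose~\eqref{form22} fails for $\mu^{(n)}$, so $\int_{\sqrt[5]{A} \leq |\eta| \leq B^2} |\widehat{\mu^{(n)}}(\eta)|^2\, d\eta > A^{-2}$. The rescaling formula derived from~\eqref{TQ} yields, for $Q = \mathbf{Q}_n$ with $\ell(Q) = 2^{-j}$, an identity of the shape $\widehat{\nu^Q}(\eta) = \|\nu|_Q\|^{-1}\, e^{-2\pi i\, \mathrm{phase}}\, \widehat{\nu|_Q}(2^j \eta_1, 4^j \eta_2)$. Decomposing the Euclidean shell $\sqrt[5]{A} \leq |\eta| \leq B^2$ into parabolic dyadic frequency bands of the form $\{\max(|\eta_1|, |\eta_2|^{1/2}) \sim 2^k\}$ and dyadic-pigeonholing, one isolates a single band still carrying a definite fraction of the $A^{-2}$ excess. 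Convolving $\nu|_{\mathbf{Q}_n}$ with a parabolic bump at the matching spatial scale and invoking Plancherel converts this $L^2$-mass on a parabolic frequency band into a spatial $L^2$-lower bound on the density of $\nu|_{\mathbf{Q}_n}$ at that parabolic scale; an $L^2 \to L^\infty$ pigeonhole on the parabolic dyadic rectangles of $\mathbf{Q}_n$ at that scale then produces the desired child $\mathbf{Q}_{n+1}$ with $D_{n+1} \geq (1 + \kappa) D_n$.

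The main obstacle I expect is executing this Fourier-to-density transfer cleanly across the parabolic anisotropy of $\mathcal{D}$. The spectral-gap shell is defined in the Euclidean metric, while the natural dyadic rectangles that host the density increment are parabolic, and a single Euclidean annulus $|\eta| \sim R$ is spread over many parabolic frequency bands, each pointing to a different parabolic spatial scale. Organising the dyadic bookkeeping to produce an absolute gain $\kappa = \kappa(A, B) > 0$, verifying that the child $\mathbf{Q}_{n+1}$ strictly refines $\mathbf{Q}_n$ so that the iteration truly progresses, and tracking the polynomial dependence of the final $\epsilon$ on $A$, $B$, $\mathbf{C}$, and $\kappa$ are where most of the technical effort must sit; these are also the estimates that ultimately pin down the absolute value of $\epsilon$ in the statement.
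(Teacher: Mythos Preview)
Your density-increment strategy is genuinely different from the paper's argument, and the crucial step --- converting failure of \eqref{form22} into a multiplicative gain $D_{n+1} \geq (1+\kappa)D_n$ --- is where the proposal breaks down. The hypothesis you have is only $\int_{\sqrt[5]{A}\le |\xi|\le B^2}|\widehat{\mu^{(n)}}|^2 > A^{-2}$, a very small amount of $L^2$ mass spread over a huge Euclidean shell. Your ``Plancherel then $L^2\to L^\infty$ pigeonhole'' produces, at best, a parabolic rectangle $Q$ of side $2^{-k}$ with $\mu^{(n)}(Q)\gtrsim (A^{-2}/N)\cdot 2^{-3k}$ (where $N$ is the number of dyadic parabolic bands in the shell), hence $D$-density $\gtrsim (A^{-2}/N)\,2^{-(3-s)k}D_n$; for this to exceed $D_n$ you would need the prefactor to be $>1$, but it is tiny. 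More conceptually, take a parabolic self-similar measure obtained by keeping $8^m-1$ of the $8^m$ depth-$m$ parabolic children and iterating: for large $m$ this is parabolic $s$-Frostman with $s$ arbitrarily close to $3$, its blow-up at every kept rectangle is \emph{itself}, its $s$-density on every kept rectangle is exactly $1$, and yet its Fourier transform need not satisfy any spectral gap. On such a $\nu$ your loop never gains density and never halts. So the mechanism ``failed spectral gap $\Rightarrow$ $s$-density increment'' is not available in the generality you need.

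What the paper does instead is a single, non-iterative construction that \emph{forces} the spectral gap by design rather than searching for it. One first locates a parabolic rectangle $\mathbf{Q}$ with nearly extremal $s$-content, $\mathcal{H}^s_\infty(K\cap\mathbf{Q})\ge (1-\delta)\ell(\mathbf{Q})^s$ for a tiny $\delta$; this is where the smallness of $\epsilon$ enters, because it guarantees that \emph{every} depth-$T$ parabolic child $Q$ of $\mathbf{Q}$ still carries content $\ge \tfrac12\ell(Q)^s$. One then applies Frostman's lemma \emph{separately on each child} $Q$ to get a measure $\mu_Q^0$ supported on $K\cap Q$, and renormalises these pieces with weights $w(Q)=\int_{T_{\mathbf{Q}}(Q)}\varphi$ for a fixed smooth bump $\varphi$. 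The resulting patchwork $\mu_{\mathbf{Q}}=\sum_Q \mu_Q$ has blow-up $\mu$ with $\mu(Q')=\int_{Q'}\varphi$ for every depth-$T$ rectangle $Q'\subset[0,1)^2$; comparing $\widehat{\mu}$ with $\widehat{\varphi}$ via the Lipschitz bound on $x\mapsto e^{-2\pi i x\cdot\xi}$ at scale $2^{-T}$ then gives $|\widehat{\mu}(\xi)-\widehat{\varphi}(\xi)|\lesssim |\xi|2^{-T}$, and \eqref{form22} follows because $\widehat{\varphi}$ is Schwartz and $T$ is chosen large relative to $B$. The essential idea you are missing is that $\mu_{\mathbf{Q}}$ is \emph{not} the restriction of a single Frostman measure; it is assembled from many independent Frostman measures so as to mimic a smooth density at the intermediate scale $2^{-T}$, and it is precisely this freedom that produces the spectral gap.
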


\begin{proof} We may assume with no loss of generality that $K \subset [0,1)^{2}$, since $d_{\Pi}$ is translation invariant. We begin by fixing certain constants $A,B,\mathbf{C} \geq 1$ and $T \in \N$. All of these constants will be absolute, and here is how they need to be chosen. The constant $A \geq 1$ is chosen in such a manner that
\begin{equation}\label{form32} \int_{|\xi| \geq \sqrt[5]{A}} |\widehat{\varphi}(\xi)| \, d\xi \leq A^{-3} \end{equation}
for some non-negative function $\varphi \in C^{\infty}(\R^{2})$ with $\spt \varphi \subset (0,1)^{2}$ and $\int \varphi(x) \, dx = 1$. Any choice of "$\varphi$" with these properties will work for us, and we may for example choose $\varphi$ in such a way that
\begin{equation}\label{form42} \|\varphi\|_{L^{\infty}} \leq 2. \end{equation}
Since the Fourier transform of any such function $\varphi$ is a Schwartz function, $A \geq 1$ exists. We will also require $A \geq 1$ to be so large that $CA^{-3} \leq A^{-2}$ for a suitable absolute constant $C \geq 1$, appearing later. Next, the constant $\mathbf{C}$ is chosen in such a way that if $\nu$ is a Borel probability measure on $[0,1]^{2}$ with $\nu(B(x,r)) \leq Cr^{11/6}$ for all $x \in \R^{2}$, all $r \in (0,1]$, and a suitable absolute constant $C \geq 1$ to be determined later, then 
\begin{displaymath} I_{10/6}(\nu) \leq \mathbf{C}. \end{displaymath}
This is possible by the remark below \eqref{riesz}. We also note that $10/6 > 3/2$. Next, the constant $B = B_{0}(A,\mathbf{C},10/6)$ is chosen as in Lemma \ref{lemma1}, given the parameters $A,\mathbf{C},\sigma = 10/6$. Finally, the parameter $T \in \N$ is chosen so large that
\begin{equation}\label{form31} 2^{-T}B^{6} \leq A^{-3}. \end{equation}
We emphasise that the "$\lesssim$" notation below only hides absolute constants, and the implicit constants will not depend on $A,B,T,\mathbf{C}$. We then begin the proof in earnest. Let $s := \Hd^{\Pi} K > 3 - \epsilon$. We assume with no loss of generality that $s > 17/6$, and $\mathcal{H}^{s}(K) > 0$. Here $\mathcal{H}^{s}$ refers to parabolic Hausdorff measure. We will also use the following \emph{dyadic parabolic Hausdorff content}:
\begin{displaymath} \mathcal{H}^{s}_{\infty}(K) := \inf\left\{\sum_{i} \ell(Q_{i})^{s} : K \subset \bigcup_{i} Q_{i}, \, Q_{i} \in \mathcal{D} \right\}. \end{displaymath}
Evidently $\mathcal{H}^{s}(K) > 0$ implies $\mathbf{K} := \mathcal{H}^{s}_{\infty}(K) > 0$, and on the other hand 
\begin{displaymath} \mathbf{K} \leq \ell([0,1]^{2})^{s} = 1. \end{displaymath}
We claim that for every $\delta > 0$ there exists a rectangle $\mathbf{Q} \in \mathcal{D}$ such that
\begin{equation}\label{form27} \mathcal{H}^{s}_{\infty}(K \cap \mathbf{Q}) \geq (1 - \delta) \ell(\mathbf{Q})^{s}. \end{equation}
Indeed, for every $\tau > 0$, we may cover $K$ with a family of rectangles $Q_{1},Q_{2},\ldots \in \mathcal{D}$ such that $\sum_{i} \ell(Q_{i})^{s} \leq \mathbf{K} + \tau$. If all of these rectangles failed to satisfy \eqref{form27}, then
\begin{displaymath} \mathbf{K} = \mathcal{H}^{s}_{\infty}(K) \leq \sum_{i} \mathcal{H}^{s}_{\infty}(K \cap Q_{i}) \leq (1 - \delta) \sum_{i} \ell(Q_{i})^{s} \leq (1 - \delta) (\mathbf{K} + \tau). \end{displaymath}
Since $\mathbf{K} > 0$, this inequality is a contradiction for $\tau > 0$ small enough, and for all such $\tau > 0$ one of the rectangles $\mathbf{Q} := Q_{i}$ satisfies \eqref{form27}.  

We now fix $\delta := \tfrac{1}{8} \cdot 2^{-3T}$, where $T \in \N$ was chosen at \eqref{form31}, and we pick $\mathbf{Q} \in \mathcal{D}$ satisfying \eqref{form27} for this $\delta$. Let $\mathrm{ch}(\mathbf{Q}) \subset \mathcal{D}$ be the generation-$T$ children of $\mathbf{Q}$. We claim that if $\epsilon = 3 - s$ is small enough (depending on $T$), then
\begin{equation}\label{form28} \mathcal{H}^{s}_{\infty}(K \cap Q) \geq \tfrac{1}{2} \cdot \ell(Q)^{s}, \qquad Q \in \mathrm{ch}(\mathbf{Q}). \end{equation}
To see this, let $\mathcal{G} := \{Q \in \mathrm{ch}(\mathbf{Q}) : \mathcal{H}^{s}_{\infty}(K \cap Q) \geq \tfrac{1}{2} \cdot \ell(Q)^{s} \}$. If $\mathcal{G}$ is a strict subset of $\mathrm{ch}(\mathbf{Q})$, then we use the inequality $\mathcal{H}_{\infty}^{s}(K \cap Q) \leq \ell(Q)^{s}$ to deduce the following estimate:
\begin{align*} 1 - \tfrac{1}{8} \cdot 2^{- 3T} \stackrel{\eqref{form27}}{\leq} \frac{\mathcal{H}^{s}_{\infty}(K \cap \mathbf{Q})}{\ell(\mathbf{Q})^{s}} & \leq \sum_{Q \in \mathcal{G}} \frac{\ell(Q)^{s}}{\ell(\mathbf{Q})^{s}} + (1 - \tfrac{1}{2}) \sum_{Q \in \mathrm{ch}(\mathbf{Q}) \, \setminus \, \mathcal{G}} \frac{\ell(Q)^{s}}{\ell(\mathbf{Q})^{s}}\\
& \leq \sum_{Q \in \mathrm{ch}(\mathbf{Q})} \frac{\ell(Q)^{s}}{\ell(\mathbf{Q})^{s}} - \tfrac{1}{2} \cdot 2^{-sT} \leq \left(\frac{\ell(Q)}{\ell(\mathbf{Q})} \right)^{s - 3} - \tfrac{1}{2} \cdot 2^{-3T}. \end{align*}
Note that the right hand side tends to $1 - \tfrac{1}{2} \cdot 2^{-3T}$ as $s \to 3$. More precisely, since $\ell(Q)/\ell(\mathbf{Q}) = 2^{-T}$, there exists $\epsilon = \epsilon(T) > 0$ such that the right hand side is $\leq 1 - \tfrac{1}{4} \cdot 2^{- 3T}$ for $s > 3 - \epsilon$. For such $\epsilon = 3 - s > 0$, the inequality above produces a contradiction. We assume that $\epsilon > 0$ is sufficiently small in the sequel, and hence \eqref{form28} holds. In particular, for every $Q \in \mathrm{ch}(\mathbf{Q})$ we may use the parabolic Frostman lemma, Lemma \ref{frostman}, to construct a measure $\mu_{Q}^0$ with the following properties:
\begin{itemize}
\item $\spt \mu_{Q}^0 \subset K \cap \overline{Q}$,
\item $\|\mu_{Q}^0\| \geq \tfrac{1}{2} \cdot \ell(Q)^{s}$, 
\item $\mu_{Q}^0(B_{\Pi}(x,r)) \leq Cr^{s}$ for all $x \in \R^{2}$ and $r > 0$.
\end{itemize}
Here $C > 0$ is an absolute constant. 
Recall the function $\varphi \in C_{c}^{\infty}(\R^{2})$ from \eqref{form32}, and define the measures $\mu_Q = \frac{w(Q)\ell(\mathbf{Q})^s}{\|\mu_Q^0\|} \mu_Q^0$ for $Q \in \mathrm{ch}(\mathbf{Q})$ using the weights
\begin{equation}\label{wQ} w(Q) := \int_{T_{\mathbf{Q}}(Q)} \varphi(x) \, dx, \qquad Q \in \mathrm{ch}(\mathbf{Q}). \end{equation}
Here $T_{\mathbf{Q}} \colon \mathbf{Q} \to [0,1)^{2}$ is the map defined in \eqref{TQ}. Recalling from \eqref{form42} that $\|\varphi\|_{L^{\infty}} \leq 2$, we have $w(Q) \leq 2\mathcal{L}^{2}(T_{\mathbf{Q}}(Q)) = 2 \cdot 2^{-3T} = 2 \cdot (\ell(Q)/\ell(\mathbf{Q}))^{3}$. Therefore (since $s \leq 3$),
\begin{displaymath}
w(Q)\ell(\mathbf{Q})^{s} \leq 2 \cdot \ell(Q)^{s} \leq 4\|\mu_{Q}^0\|, 
\end{displaymath}
from which it follows that $\mu_{Q}(B_{\Pi}(x,r)) \leq 4Cr^{s}$. We further observe that 
\begin{equation}\label{form36} \|\mu_{Q}\| = w(Q)\ell(\mathbf{Q})^{s} \leq (2\cdot 2^{-3T})\ell(\mathbf{Q})^{s}, \qquad Q \in \mathrm{ch}(\mathbf{Q}). \end{equation}
We then define
\begin{displaymath} \mu_{\mathbf{Q}} := \sum_{Q \in \mathrm{ch}(\mathbf{Q})} \mu_{Q}. \end{displaymath}
Thus, 
\begin{equation}\label{form37} \|\mu_{\mathbf{Q}}\| = \ell(\mathbf{Q})^{s} \sum_{Q \in \mathrm{ch}(\mathbf{Q})} w(Q) = \ell(\mathbf{Q})^{s} \sum_{Q \in \mathrm{ch}(\mathbf{Q})} \int_{T_{\mathbf{Q}}(Q)} \varphi(x) \, dx = \ell(\mathbf{Q})^{s}. \end{equation}
Moreover, $\mu_{\mathbf{Q}}$ is also a parabolic $s$-Frostman measure. Indeed, the Frostman estimate $\mu_{\mathbf{Q}}(B_{\Pi}(x,r)) \lesssim r^{s}$ is clear for $0 < r \leq 2^{-T}\ell(\mathbf{Q})$, since then $B_{\Pi}(x,r)$ only intersects a bounded number of the supports of the measures $\mu_{Q}$. The estimate is also clear for $r \geq \ell(\mathbf{Q})$ by \eqref{form37}. Finally, if $2^{-T}\ell(\mathbf{Q}) \leq r \leq \ell(\mathbf{Q})$, then (using again that $s \leq 3$),

\begin{displaymath} \mu_{\mathbf{Q}}(B_{\Pi}(x,r)) \leq \mathop{\sum_{Q \in \mathrm{ch}(\mathbf{Q})}}_{Q \cap B_{\Pi}(x,r) \neq \emptyset} \|\mu_{Q}\| \stackrel{\eqref{form36}}{\lesssim} (r/(2^{-T}\ell(\mathbf{Q})))^{3} \cdot 2^{-3T} \ell(\mathbf{Q})^{s} \leq r^{s}. \end{displaymath}

Next, we let $\mu := \ell(\mathbf{Q})^{-s}T_{\mathbf{Q}}\mu_{\mathbf{Q}} = \|\mu_{\mathbf{Q}}\|^{-1}T_{\mathbf{Q}}\mu_{\mathbf{Q}}$ be the renormalised blow-up of $\mu_{\mathbf{Q}}$. Then $\mu$ is a Borel probability measure with $\spt (\mu) \subset [0,1]^{2}$, and $\mu$ is also a parabolic $s$-Frostman measure (this crucially uses the fact that the renormalisation constant $\|\mu_{\mathbf{Q}}\|^{-1}$ coincides with $\ell(\mathbf{Q})^{-s}$). Since we assumed that $s > 17/6$, we may now infer, in particular, that $\mu$ is a Euclidean $(11/6)$-Frostman measure, and we have computed above that the Frostman constant of $\mu$ is bounded by some absolute constant $C > 0$. It follows that $I_{10/6}(\mu) \leq \mathbf{C}$, assuming that $\mathbf{C}$ was initially chosen large enough depending on this $C$.

To conclude the proof, it only remains to verify that $\mu$ satisfies the "spectral gap" condition \eqref{form22}. Let 
\begin{displaymath} \mathrm{ch}(Q_{0}) := \{Q \in \mathcal{D} : \ell(Q) = 2^{-T}\} = \{T_{\mathbf{Q}}(Q) : Q \in \mathrm{ch}(\mathbf{Q})\}. \end{displaymath}
Then
\begin{equation}\label{form29} \mu(Q) = \frac{\mu_{\mathbf{Q}}(T_{\mathbf{Q}}^{-1}(Q))}{\ell(\mathbf{Q})^{s}} \stackrel{\eqref{form36}}{=} w(T_{\mathbf{Q}}^{-1}(Q)) \stackrel{\eqref{wQ}}{=} \int_{Q} \varphi(x) \, dx, \qquad Q \in \mathrm{ch}(Q_{0}), \end{equation}
We will also treat $\varphi$ as a measure, so we abbreviate $\int f \, d\varphi := \int f(x)\varphi(x) \, dx$. We will now estimate the Fourier transform of $\mu$ by comparing it to the Fourier transform of $\varphi$. Since $\mu(Q) = \varphi(Q)$ for all $Q \in \mathrm{ch}(Q_{0})$, we have also
\begin{displaymath} \int_{Q} e^{-2\pi c_{Q} \cdot \xi} \, d\mu(x) = \int_{Q} e^{-2\pi i c_{Q} \cdot \xi} d\varphi(x), \qquad \xi \in \R^{2}, \, Q \in \mathrm{ch}(Q_{0}),  \end{displaymath}
where $c_{Q} \in Q$ refers to the midpoint of the rectangle $Q$. For $\xi \in \R^{2}$ fixed, noting that $x \mapsto e^{-2\pi i x \cdot \xi}$ is $C|\xi|$-Lipschitz, and $|x - c_{Q}| \leq 2^{-T}$ for $x \in Q \in \mathrm{ch}(Q_{0})$, this allows us to estimate as follows:
\begin{align*} |\hat{\mu}(\xi) - \widehat{\varphi}(\xi)| & = \left| \int e^{-2\pi i x \cdot \xi} \, d\mu(x) - \int e^{-2\pi i x \cdot \xi} \, d\varphi(x) \right|\\
& \leq \sum_{Q \in \mathrm{ch}(Q_{0})} \int_{Q} |e^{-2\pi i x \cdot \xi} - e^{-2\pi i c_{Q} \cdot \xi}| \, d\|\mu - \varphi\|(x)\\
& \lesssim |\xi| \cdot 2^{-T} \sum_{Q \in \mathrm{ch}(Q_{0})} \|\mu - \varphi\|(Q) \lesssim |\xi| \cdot 2^{-T}. \end{align*}
Consequently, noting also that $\|\hat{\mu}\|_{L^{\infty}} \leq 1$ and $\|\widehat{\varphi}\|_{L^{\infty}} \leq 1$, we have
\begin{align} \int_{\sqrt[5]{A} \leq |\xi| \leq B^{2}} |\hat{\mu}(\xi)|^{2} \, d\xi & \lesssim \int_{\sqrt[5]{A} \leq |\xi| \leq B^{2}} |\hat{\mu}(\xi) - \widehat{\varphi}(\xi)| \, d\xi + \int_{|\xi| \geq \sqrt[5]{A}} |\widehat{\varphi}(\xi)| \, d\xi \notag\\
&\label{form44} \stackrel{\eqref{form32}}{\lesssim} 2^{-T} \int_{|\xi| \leq B^{2}} |\xi| \, d\xi + A^{-3} \lesssim 2^{-T}B^{6} + A^{-3}.  \end{align} 
Since $2^{-T}B^{6} \leq A^{-3}$ by \eqref{form31}, we finally see that 
\begin{displaymath} \int_{\sqrt[5]{A} \leq |\xi| \leq B^{2}} |\hat{\mu}(\xi)|^{2} \, d\xi \leq CA^{-3} \leq A^{-2}, \end{displaymath}
assuming that "$A$" was chosen large enough to begin with, depending only only the absolute constants implicit in \eqref{form44}. This completes the proof of Proposition \ref{mainProp}. \end{proof}

\section{Concluding the proof of Theorem \ref{mainThm}}\label{sec:conclusion}

Theorem \ref{mainThm} follows easily by combining Lemma \ref{lemma1} and Propositions \ref{auxProp}-\ref{mainProp}. 

\begin{proof}[Proof of Theorem \ref{mainThm}] The following relation holds between Euclidean and parabolic Hausdorff dimension:
\begin{displaymath} \Hd^{\Pi} K \geq 2\Hd K - 1. \end{displaymath}
This is easiest to see by observing that every Euclidean $s$-Frostman measure is a parabolic $(2s - 1)$-Frostman measure: for $r \in (0,1]$, a parabolic ball $B_{\Pi}(x,r)$ can be covered by $\sim r^{-1}$ Euclidean discs of radius $r^{2}$. Therefore, if $\epsilon > 0$ is the absolute constant from Proposition \ref{mainProp}, and $\Hd K > 2 - \epsilon/2$, then $\Hd^{\Pi} K > 3 - \epsilon$, and Proposition \ref{mainProp} becomes applicable.

By Proposition \ref{mainProp}, there exists a parabolic rectangle $\mathbf{Q} \in \mathcal{D}$ and a non-trivial measure $\mu_{\mathbf{Q}}$ with $\spt (\mu_{\mathbf{Q}}) \subset K \cap \mathbf{Q}$ such that the renormalised blow-up $\mu := (\mu_{\mathbf{Q}})^{\mathbf{Q}}$ satisfies the hypotheses of Lemma \ref{lemma1} with constants $A,B,\mathbf{C}$ and $\sigma = 10/6$. Therefore
\begin{displaymath} \liminf_{\delta \to 0} \int (\mu \ast \psi_{\delta}) \ast \pi \, d\mu > 0, \end{displaymath}
where $\pi$ is the length measure on the truncated parabola $\{(z,z^{2}) : A^{-2} \leq |z| \leq 1\}$, and $\psi \in \mathcal{S}(\R^{2})$ is the Schwartz function defined in \eqref{form46}. It now follows from Proposition \ref{auxProp} that there exist $(x_{1},t_{1}) \in \spt (\mu)$ and $z \neq 0$ such that $(x_{1} + z,t_{1} + z^{2}) \in \spt(\mu)$. 

Next, note that $\spt (\mu) = T_{\mathbf{Q}}(\spt (\mu_{\mathbf{Q}}))$, so 
\begin{equation}\label{form40} T^{-1}_{\mathbf{Q}}(\{(x_{1},t_{1}),(x_{1} + z,t_{1} + z^{2})\}) \subset T^{-1}_{\mathbf{Q}}(\spt (\mu)) = \spt (\mu_{\mathbf{Q}}) \subset K. \end{equation}
Since $T_{\mathbf{Q}}(x,t) = (2^{j}(x - x_{0}),4^{j}(t - t_{0}))$ for some $j \geq 0$ and $(x_{0},t_{0}) \in \R^{2}$, the inverse $T^{-1}_{\mathbf{Q}}$ has the formula $T^{-1}_{\mathbf{Q}}(x,t) = (2^{-j}x + x_{0}, 4^{-j}t + t_{0})$. Therefore, \eqref{form40} is equivalent to
\begin{displaymath} \mathbf{x} := (2^{-j}x_{1} + x_{0},4^{-j}t_{1} + t_{0}) \in K \quad \text{and} \quad (2^{-j}(x_{1} + z) + x_{0},4^{-j}(t_{1} + z^{2}) + t_{0}) \in K. \end{displaymath}
Finally, note that the point on the right equals $\mathbf{x} + (\bar{z},\bar{z}^{2})$ with $\bar{z} := 2^{-j}z \neq 0$. Therefore $\{\mathbf{x},\mathbf{x} + (\bar{z},\bar{z}^{2})\} \subset K$, and the proof is complete. \end{proof}

\section{The proof of Theorem \ref{finite field bound}}\label{section on finite fields}
Let $\FF_q$ be a finite field with $q$ elements, where $q = p^n$ is a power of a prime $p>2$. We conclude with the proof of Theorem \ref{finite field bound}, which is a consequence of the following counting result.
\begin{proposition}\label{count of progressions}
Let $A\subset\FF_q^2$ have cardinality $|A| = \alpha q^2$ for some $0\leq \alpha \leq 1$. Then
\begin{align}\label{our progression}
    \left\{(x,  y,  z)\in\FF_q^3: (x,  y),  (x + z,  y + z^2)\in A\}\right|\geq \left(\alpha - q^{-\frac{1}{2}}\right)\alpha q^3
\end{align}
\end{proposition}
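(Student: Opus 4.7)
The plan is to prove Proposition \ref{count of progressions} by the standard $\FF_q^2$-Fourier analytic (``circle method'') computation used for Roth-type counts in finite fields. Fix a non-trivial additive character $e_q : \FF_q \to \CC$, and for $f : \FF_q^2 \to \CC$ define $\hat{f}(\xi) := \sum_{u \in \FF_q^2} f(u)\, e_q(-u\cdot\xi)$, so that Plancherel reads $\sum_\xi |\hat{f}(\xi)|^2 = q^2 \sum_u |f(u)|^2$. Denote by $N$ the left-hand side of \eqref{our progression}.

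My first step is to expand the second indicator in
\begin{displaymath}
N \;=\; \sum_{(x,y,z) \in \FF_q^3} 1_A(x,y)\, 1_A(x+z,\, y+z^2)
\end{displaymath}
via Fourier inversion on $\FF_q^2$ and then separate the sum over $(x,y)$ (which collapses to $\overline{\hat{1}_A(\xi,\eta)}$, using that $1_A$ is real) from the sum over $z$. This produces the identity
\begin{displaymath}
N \;=\; q^{-2} \sum_{(\xi,\eta) \in \FF_q^2} |\hat{1}_A(\xi,\eta)|^2 \, S(\xi,\eta), \qquad S(\xi,\eta) := \sum_{z \in \FF_q} e_q(z\xi + z^2 \eta).
\end{displaymath}
All of the content of the proof now sits in controlling the quadratic character sum $S(\xi,\eta)$. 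When $\eta=0$, orthogonality gives $S(\xi,0) = q\cdot\mathbf{1}_{\xi=0}$. When $\eta\neq 0$, I would complete the square
\begin{displaymath}
z\xi + z^2 \eta \;=\; \eta\bigl(z + \xi/(2\eta)\bigr)^2 - \xi^2/(4\eta),
\end{displaymath}
which is where the hypothesis $p>2$ is used, as it ensures $2\in\FF_q^\times$. This reduces $S(\xi,\eta)$ to a classical quadratic Gauss sum in a shifted variable, for which $|S(\xi,\eta)| = \sqrt{q}$ is standard.

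Assembling: the frequency $(\xi,\eta)=(0,0)$ contributes the main term $q^{-2}\cdot|A|^2\cdot q = \alpha^2 q^3$; the axis $\{\eta=0,\,\xi\neq 0\}$ drops out entirely; the remaining frequencies are controlled by the uniform bound $|S|\leq \sqrt{q}$ together with Plancherel, yielding
\begin{displaymath}
|N - \alpha^2 q^3| \;\leq\; q^{-2}\sqrt{q}\sum_{\xi,\eta} |\hat{1}_A(\xi,\eta)|^2 \;=\; q^{-3/2}\cdot q^2 |A| \;=\; \alpha\, q^{5/2},
\end{displaymath}
which rearranges to the claimed bound $N \geq \alpha(\alpha - q^{-1/2})q^3$. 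I do not anticipate any genuine obstacle, since every ingredient (Fourier inversion, Plancherel, and the Weil-type $\sqrt{q}$-bound for quadratic Gauss sums) is completely classical; the only hypothesis-dependent step is completing the square, which is exactly why odd characteristic must be assumed. Theorem \ref{finite field bound} will follow at once, since for $|A|\geq 2q^{3/2}$ (equivalently $\alpha \geq 2q^{-1/2}$) one has $N\geq \tfrac12 \alpha^2 q^3 \geq 2q^2$, which strictly exceeds the $|A| = \alpha q^2$ trivial contributions from $z=0$ once $q$ is large enough, forcing a genuine $z\neq 0$ witness.
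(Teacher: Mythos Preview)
Your proposal is correct and follows essentially the same Fourier-analytic route as the paper. The only cosmetic differences are that the paper first states a bilinear version (Proposition \ref{equality of counting operators}) before specialising to $f=g=1_A$, and that it bounds the quadratic character sum $\sum_{z} \xi_1(z)\xi_2(z^2)$ via Weyl differencing (squaring and applying orthogonality) rather than by completing the square; both give the same $\sqrt{q}$ bound and both are exactly where the hypothesis $p>2$ enters.
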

Proposition \ref{count of progressions} is a consequence of the result below with $f = g = 1_A$, where $1_A$ is the indicator function of the set $A$, and the simple observation that $\|1_A\|_2 = \sqrt{\alpha}$ for the normalization of the $L^2$ norm given below in \eqref{L^p norm}. 
\begin{proposition}\label{equality of counting operators}
Let $f, g: \FF_q^2 \to \CC$. Then
\begin{align}\label{equality of counting operators - equation}
    \left|\sum_{x, y, z\in\FF_q} f(x, y) g(x + z, y+z^2) - \frac{1}{q}\sum_{x,y\in\FF_q} f(x,y) \sum_{x,y\in\FF_q} g(x,y) \right|\leq q^\frac{5}{2}\|f\|_2 \|g\|_2.
\end{align}
\end{proposition}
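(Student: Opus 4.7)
The plan is standard finite Fourier analysis on $\FF_q^{2}$ combined with the square-root cancellation of quadratic Gauss sums. First, I would fix a non-trivial additive character $\chi \colon \FF_q \to \CC$ and introduce the normalized Fourier transform
$$\widehat{h}(\xi,\eta) := \frac{1}{q^{2}}\sum_{x,y \in \FF_q} h(x,y)\chi(-\xi x - \eta y),$$
giving Fourier inversion $h(x,y) = \sum_{\xi,\eta} \widehat{h}(\xi,\eta)\chi(\xi x + \eta y)$ and Parseval $\sum_{\xi,\eta}|\widehat{h}(\xi,\eta)|^{2} = \|h\|_2^{2}$ in the averaged $L^{2}$ normalization used in the proposition (note $\|\mathbf{1}_A\|_2^{2} = \alpha$ under this convention).

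Second, I would insert the Fourier expansions of $f$ and $g$ into the triple sum; executing the sums in $x$ and $y$ produces Kronecker deltas that collapse it to
$$\sum_{x,y,z} f(x,y)\, g(x+z, y+z^{2}) = q^{2} \sum_{\xi,\eta \in \FF_q} \widehat{f}(-\xi,-\eta)\,\widehat{g}(\xi,\eta)\, S(\xi,\eta),$$
where $S(\xi,\eta) := \sum_{z\in\FF_q} \chi(\xi z + \eta z^{2})$. The contribution of $(\xi,\eta) = (0,0)$ equals $q^{3}\widehat{f}(0,0)\widehat{g}(0,0) = \frac{1}{q}\bigl(\sum f\bigr)\bigl(\sum g\bigr)$, precisely the main term being subtracted on the left of \eqref{equality of counting operators - equation}.

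The key step, and the only non-routine one, is a uniform bound on $|S(\xi,\eta)|$ for $(\xi,\eta) \neq (0,0)$. For $\eta = 0$ and $\xi \neq 0$ one has $S(\xi,0) = 0$ by character orthogonality. For $\eta \neq 0$, I would complete the square
$$\xi z + \eta z^{2} = \eta\bigl(z + \xi/(2\eta)\bigr)^{2} - \xi^{2}/(4\eta),$$
which is legal precisely because $p > 2$, and reduce $S(\xi,\eta)$ to a shifted classical quadratic Gauss sum $\sum_{w} \chi(\eta w^{2})$ of modulus $\sqrt{q}$. Hence $|S(\xi,\eta)| \leq \sqrt{q}$ for every $(\xi,\eta) \neq (0,0)$.

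Finally, applying this bound to the off-diagonal contribution and combining it with Cauchy--Schwarz and Parseval yields
$$\biggl|\sum_{x,y,z} f(x,y)\, g(x+z,y+z^{2}) - \tfrac{1}{q}\!\sum f \sum g\biggr| \leq q^{5/2}\Bigl(\sum_{\xi,\eta}|\widehat{f}|^{2}\Bigr)^{1/2}\!\Bigl(\sum_{\xi,\eta}|\widehat{g}|^{2}\Bigr)^{1/2} = q^{5/2}\|f\|_2 \|g\|_2,$$
which is exactly \eqref{equality of counting operators - equation}. I expect no genuine obstacle; the only place that needs care is the Gauss sum magnitude $\sqrt{q}$, which uses the hypothesis $p > 2$ (and, if desired, can be read off from standard Weil-type bounds).
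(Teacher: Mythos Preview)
Your proposal is correct and matches the paper's proof essentially step for step: both expand on the Fourier side, isolate the $(\xi,\eta)=(0,0)$ term as the main term, bound the remaining terms via the Gauss-sum estimate $|S(\xi,\eta)|\le q^{1/2}$ for $(\xi,\eta)\neq(0,0)$, and finish with Cauchy--Schwarz and Parseval. The only cosmetic differences are that the paper phrases the decomposition as Fourier-inverting $1_{\Pi}$ rather than $f,g$, and it obtains the Gauss-sum bound by squaring and differencing rather than by completing the square; both variants are standard and both use $p>2$ in the same place.
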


\begin{proof}[Proof of Theorem \ref{finite field bound}]
Let $A\subset \FF_q^2$ with $|A| = \alpha q^2$. We split the count of progressions $\{(x,  y),  (x + z,  y + z^2)\}$ inside $A$ into ``trivial'' progressions with $z = 0$ and nontrivial ones with $z\neq 0$:
\begin{equation}\label{form47}
    \sum_{x, y, z\in \FF_q}1_A(x, y) 1_A(x + z, y + z^2) = \sum_{x, y\in\FF_q}1_A(x, y) + \sum_{\substack{x, y\in\FF_q,\\ z\in \FF_q \, \setminus \, {\{0\}}}}1_A(x, y) 1_A(x + z, y + z^2).
\end{equation}
The first term on the right-hand side is just $|A| = \alpha q^2$. By Proposition \ref{count of progressions} applied to the left hand side of \eqref{form47}, the second term on the right-hand side of \eqref{form47} can be bounded from below as
\begin{align*}
    \sum_{\substack{x, y\in\FF_q,\\ z\in \FF_q \, \setminus \, {\{0\}}}}1_A(x, y) 1_A(x + z, y + z^2) \geq (\alpha -  q^{-\frac{1}{2}} - q^{-1}) \alpha q^3 > (\alpha - 2q^{-\frac{1}{2}}) \alpha q^3.
\end{align*}
The quantity $\alpha - 2q^{-\frac{1}{2}}$ is nonnegative whenever $\alpha \geq 2 q^{-\frac{1}{2}}$, and so $|A|$ contains a nontrivial progression whenever $|A| \geq 2 q^\frac{3}{2}$, as claimed. \end{proof}

The proof of Proposition \ref{equality of counting operators} relies on the Fourier decay of the indicator function of the parabola $\Pi = \{(z,z^2)\in\FF_q^2: z\in\FF_q\}$. To state this formally, we need to avail ourselves of the Fourier analysis on $\FF_q$. We introduce its basics now, referring the reader to e.g. \cite{Wan} for more details.

The Fourier transform of $f:\FF_q^2\to\CC$ is defined by the formula 
\begin{align*}
    \hat{f}(\xi_1, \xi_2) = \frac{1}{q^2} \sum_{x,y\in{\FF_q}}f(x, y) \xi_1(x) \xi_2(y),
\end{align*}
where $\xi_1, \xi_2:\FF_q\to\CC$ are characters, i.e. group homomorphisms from $\FF_q$ to the unit circle in $\CC$. Each character $\xi:\FF_q\to\CC$ takes the form $\xi(x) = e^{2\pi i \textrm{Tr}(a x)/p}$ for some $a\in\FF_q$, where $\textrm{Tr}(x) = x + x^p + \ldots + x^{p^{n-1}}$ for $x \in \FF_q$, recall $q = p^n$ and $p>2$ is prime. We rely on two main properties of characters: that characters are group homomorphisms, i.e. $\xi(x + y) = \xi(x)\xi(y)$ for every $x, y\in\FF_q$, and that they satisfy an orthogonality relation
\begin{align}\label{orthogonality}
    \sum_{x\in\FF_q}\xi(x) = \begin{cases} q,\; \xi = 1\\
    0,\; \xi\neq 1.
    \end{cases}
\end{align}

We denote the groups of characters on $\FF_q^2$ and $\FF_q$ as $\widehat{\FF_q^2}$ and $\widehat{\FF_q}$ respectively. The group $\widehat{\FF_q^2}$ is of the form $\widehat{\FF_q^2} = \widehat{\FF_q}\times \widehat{\FF_q}$, meaning that every character on $\FF_q^2$ takes the form $\xi(x, y) = \xi_1(x)\xi_2(y)$ for some characters $\xi_1, \xi_2:\FF_q\to\CC$. By Pontryagin duality, the group $\widehat{\FF_q^2}$ is isomorphic to $\FF_q^2$, and more explicitly, the isomorphism takes the form $$a\mapsto (x\mapsto e^{2\pi i \textrm{Tr}(a x)/p}).$$ The Fourier inversion formula in this case is
\begin{align*}
    f(x, y) = \sum_{\xi_1, \xi_2\in\widehat{\FF_q}}\hat{f}(\xi_1, \xi_2) \overline{\xi_1(x)\xi_2(y)}.
\end{align*}

Making the choices of normalization that have become standard in additive combinatorics, we define the $L^2$ norm of functions $f:\FF_q^2\to\CC$ and $\widehat{f}:\widehat{\FF_q^2}\to\CC$ as
\begin{align}\label{L^p norm}
    \|f\|_2 = \Big(\frac{1}{q^2}\sum\limits_{x, y\in\FF_q}|f(x,y)|^2\Big)^\frac{1}{2}\quad \textrm{and} \quad
    \|\widehat{f}\|_2 = \Big(\sum\limits_{\xi_1, \xi_2 \in \widehat{\FF_q}}\left|\widehat{f}(\xi_1, \xi_2)\right|^2\Big)^\frac{1}{2}.
    \end{align}
With these normalisations, the Parseval identity takes the simple form $\|f\|_2 = \|\hat{f}\|_2$.

\begin{proof}[Proof of Proposition \ref{equality of counting operators}]
Let $\Pi = \{(z,z^2)\in\FF_q^2: z\in\FF_q\}$ be the parabola inside $\FF_q^2$. We start by computing the values of its Fourier transform:
\begin{align*}
    \widehat{1_\Pi}(\xi_1, \xi_2) = \frac{1}{q^2}\sum_{x,y\in\FF_q}1_\Pi(x,y) \xi_1(x) \xi_2(y) = \frac{1}{q^2}\sum_{x\in\FF_q}\xi_1(x) \xi_2(x^2).
\end{align*}
Since we have assumed that the field $\FF_q$ has characteristic $p>2$, the sum $\sum\limits_{x\in\FF_q}\xi_1(x) \xi_2(x^2)$ equals $q$ when $\xi_1 = \xi_2 = 1$ are the trivial characters, otherwise its modulus is bounded from above by $q^\frac{1}{2}$. This is because when $\xi_2 = 1, \xi_1 \neq 1$, then the orthogonality relation \eqref{orthogonality} of characters implies that $$\sum\limits_{x\in\FF_q}\xi_1(x) \xi_2(x^2) = \sum\limits_{x\in\FF_q} \xi_1(x) = 0,$$ while if $\xi_2\neq 1$, then the multiplicative property $\xi_i(x + y) = \xi_i(x) \xi_i(y)$ allows us to deduce
\begin{align*}
    \left|\sum_{x\in\FF_q} \xi_1(x) \xi_2(x^2)\right|^2 = \sum_{x,h\in\FF_q} \xi_1(h) \xi_2(2hx + h^2) \leq \sum_{h\in\FF_q} \left|\sum_{x\in\FF_q} \xi_2(2hx)\right| = q + \sum_{h\in \FF_q\setminus{\{0\}}} \left|\sum_{x\in\FF_q} \xi_2(2hx)\right|.
\end{align*}
We then use the orthogonality to infer that for every $h\neq 0$, the sum of the nontrivial character $\xi_{2,h}(x) = \xi_2(2hx)$ over $\FF_q$ vanishes, and so $|\sum_{x\in\FF_q}\xi_1(x) \xi_2(x^2)|=q^\frac{1}{2}$ in this case (the assumption $p>2$ on the characteristic of $\FF_q$ is necessary for $\xi_{2, h}$ to be nontrivial). 
Hence $\widehat{1_\Pi}(\xi_1, \xi_2) = \frac{1}{q}$ if $\xi_1 = \xi_2 = 1$ and
\begin{align}\label{Fourier decay of parabola}
    |\widehat{1_\Pi}(\xi_1, \xi_2)|\leq q^{-\frac{3}{2}}
\end{align}
otherwise. Using the Fourier inversion formula, we deduce that
\begin{align}\label{eq in fin fields}
    \nonumber 
    \sum_{x, y, z\in\FF_q} f(x, y) g(x + z, y+z^2) &= \sum_{\substack{x, y, z,\\ u\in\FF_q}} f(x, y) g(x + z, y+u) 1_{\Pi}(z, u)\\
    &= \sum_{\xi_1, \xi_2\in\widehat{\FF_q}} \widehat{1_\Pi}(\xi_1, \xi_2) \sum_{\substack{x, y, z,\\ u\in\FF_q}} f(x, y) g(x + z, y+u) \xi_1(-z)\xi_2(-u).
\end{align}
Splitting \eqref{eq in fin fields} into  the term $(\xi_1, \xi_2) = (1, 1)$ and the sum over the other terms, we obtain the equality
\begin{align*}
     \sum_{x, y, z\in\FF_q} f(x, y) g(x + z, y+z^2) &= \frac{1}{q}\sum_{x,y\in\FF_q}f(x,y) \sum_{z, u\in\FF_q}g(z, u)\\
     &+ \sum_{\substack{\xi_1, \xi_2\in \widehat{\FF_q}\\ (\xi_1, \xi_2)\neq (1,1)}} \widehat{1_\Pi}(\xi_1, \xi_2) \sum_{\substack{x, y, z,\\ u\in\FF_q}} f(x, y) g(x + z, y+u) \xi_1(-z)\xi_2(-u).
\end{align*}
In the remainder of the proof, we shall bound the contributions of the terms with $(\xi_1, \xi_2)\neq (1,1)$ by invoking \eqref{Fourier decay of parabola}. Noting that 
\begin{align*}
    &\sum_{\substack{x, y, z,\\ u\in\FF_q}} f(x, y) g(x + z, y+u) \xi_1(-z)\xi_2(-u)\\
    &= \sum_{x,y\in\FF_q}f(x,y)\xi_1(x)\xi_2(y)\sum_{z, u\in\FF_q} g(x + z, y+u) \xi_1(-x-z)\xi_2(-y-u) =q^4 \widehat{f}(\xi_1, \xi_2) \overline{\widehat{g}(\xi_1, \xi_2)},
\end{align*}
we apply (\ref{Fourier decay of parabola}), the Cauchy-Schwarz inequality and Parseval identity to deduce that
\begin{align*}
    &\left|\sum_{\substack{\xi_1, \xi_2\in \widehat{\FF_q}\\ (\xi_1, \xi_2)\neq (1,1)}} \widehat{1_\Pi}(\xi_1, \xi_2) \sum_{x, y, z, u} f(x, y) g(x + z, y+u) \xi_1(-z)\xi_2(-u) \right|\\
    &\leq q^{\frac{5}{2}}\sum_{\substack{\xi_1, \xi_2\in \widehat{\FF_q}\\ (\xi_1, \xi_2)\neq (1,1)}} |\widehat{f}(\xi_1, \xi_2)|\cdot |\widehat{g}(\xi_1, \xi_2)| \leq q^{\frac{5}{2}} \cdot \|\hat{f}\|_2 \cdot \left\|\hat{g}\right\|_2 = q^{\frac{5}{2}}\cdot \|f\|_2 \cdot \left\|g\right\|_2.
\end{align*}
Hence 
\begin{align*}
        \left|\sum_{x, y, z\in\FF_q} f(x, y) g(x + z, y+z^2) - \frac{1}{q}\sum_{x,y\in\FF_q} f(x,y) \sum_{x,y\in\FF_q} g(x,y) \right|\leq q^\frac{5}{2}\|f\|_2 \|g\|_2,
\end{align*}
as claimed. 
\end{proof}
\bibliographystyle{plain}
\bibliography{references}

\end{document}